\theoremstyle{plain} 
\newtheorem{thm}{Theorem}[section]
\newtheorem{prop}[thm]{Proposition}
\newtheorem{lemma}[thm]{Lemma}
\newtheorem{cor}[thm]{Corollary} 
\newtheorem{question}[thm]{Question}
\theoremstyle{remark}
\newtheorem{remark}[thm]{Remark}
\newtheorem{notation}[thm]{Notation}
\theoremstyle{definition}
\newtheorem{defin}[thm]{Definition}
\newcommand{\CC}{\mathbb{C}}
\newcommand{\QQ}{\mathbb{Q}}
\newcommand{\ZZ}{\mathbb{Z}}
\DeclareMathOperator{\tr}{tr}
\DeclareMathOperator{\resid}{resid}
\DeclareMathOperator{\Res}{Res}
\newcommand{\dsps}{\displaystyle}
\begin{document}
	
	\title{The non-unit conjecture for Misiurewicz parameters}
	\date{June 15, 2025}
	\subjclass[2020]{37P15, 11R09, 37P20}
	\author[Benedetto]{Robert L. Benedetto}
	\address{Amherst College \\ Amherst, MA 01002 \\ USA}
	\email{rlbenedetto@amherst.edu}
	\author[Goksel]{Vefa Goksel}
	\address{Towson University \\ Towson, MD 21252 \\ USA}
	\email{vgoksel@towson.edu}
	
\begin{abstract}
A Misiurewicz parameter is a complex number $c$ for which
the orbit of the critical point $z=0$ under $z^2+c$ is strictly preperiodic.
Such parameters play the same role as special points in dynamical moduli spaces
that singular moduli (corresponding to CM elliptic curves) play as special points on modular curves.
Building on our earlier work, we investigate
whether the difference of two Misiurewicz parameters can be an algebraic unit.
(The corresponding question for singular moduli was recently answered in the negative by Li.)
We answer this dynamical question in many new cases under a widely believed irreducibility assumption. 
\end{abstract}
	
\maketitle

\section{Introduction}
Let $f\in \CC[z]$ be a polynomial of degree $d\geq 2$. We denote by $f^n$ the \emph{$n$-th iterate} of $f$, which is defined recursively by $f^n=f\circ f^{n-1}$ for $n\geq 2$.
We say that $f$ is called \emph{post-critically finite} (PCF) if all its critical points $\gamma$
have finite orbit $\mathcal{O}_f(\gamma):=\{f^n(\gamma) \, | \, n\geq 1\}$ under iteration of $f$.
In this work, we will focus on the PCF parameters $c$ in the family of quadratic polynomials
\[ f_{c}:=z^2+c\in \CC[z], \]
which has been of particular interest in the study of both complex and arithmetic dynamics.
Any quadratic polynomial over $\CC$ is linearly conjugate to a unique such $f_c$,
so that the family $\{f_c \, | \, c\in\CC\}$ is the appropriate dynamical parameter space of quadratic polynomials.

Since $0$ is the only (finite) critical orbit of $f_c$, the post-critical orbit of $f_c$ is given by
\[\mathcal{O}_{f_c}(0) = \{f_c(0),f_c^2(0),\dots\}.\]
If $x\in\CC$ satisfies $f_c^n(x) = x$ for some minimal positive integer $n$,
we say $x$ is \emph{periodic} (of exact period $n$). If
$x$ is not periodic but $f_c^m(x) = f_c^{m+n}(x)$ for some minimal positive integers $m$ and $n$,
we say $x$ is (strictly) \emph{preperiodic of type $(m,n)$}.
Note that for any PCF quadratic polynomial $f_c$, the point $0$ must be either periodic
or strictly preperiodic; and in the strictly preperiodic case, we must have $m\geq 2$.
For ease of notation, we write
\[ a_i =a_i(c) := f_c^i(0) \quad\text{for all } i\geq 0 \]
throughout this paper.
For $m\geq 2$, $n\geq 1$, the parameters $c$ such that
$f_{c}$ has a preperiodic orbit of type $(m,n)$ are called the \emph{Misiurewicz parameters} of type $(m,n)$.
They are the roots of a monic polynomial $G_{m,n}\in\ZZ[c]$ with integer coefficients, defined by
\begin{equation}
	\label{eq:Gdef}
	G_{m,n}(c):= \prod_{k|n} \big( a_{m+k-1} + a_{m-1}\big)^{\mu(n/k)}
	\cdot \begin{cases}
		\prod_{k|n} \big( a_{k}\big)^{-\mu(n/k)}
		& \text{ if } n|m-1,
		\\
		1 & \text{ if } n \nmid m-1.
	\end{cases}
\end{equation}
See \cite{BG1,BG2,BEK19,Buff18,Gok19,Gok20,HT15,Patra25} for further background on the Misiurewicz polynomials $G_{m,n}$.

In \cite{BD13}, Baker and DeMarco proposed a dynamical version of the André-Oort Conjecture by relating
PCF parameters (including Misiurewicz parameters) in dynamical moduli spaces to special André-Oort points 
found on Shimura varieties. In this framework, PCF parameters in these moduli spaces play a role similar to that of
CM points on modular curves,  and they are believed to share similar arithmetic characteristics.
For additional information on the dynamical André-Oort Conjecture, see \cite{GKNY17}.

It has been conjectured that $G_{m,n}$ is irreducible over $\mathbb{Q}$ for all $m\geq 2$, $n\geq 1$;
see, for example, \cite[Remark~3.5]{Milnor12} and \cite[Exercise~4.17]{Sil07}.
Goksel \cite{Gok19, Gok20}, and Buff, Epstein, and Koch \cite{BEK19}
have proven this conjecture in some special cases.
Although this irreducibility conjecture remains open, our interest
in this paper relates to the following related dynamical question.
\begin{question}
	\label{question:c1-c2}
Let $c_0$ and $c_1$ be Misiurewicz parameters of type $(m,n)$ and $(k,\ell)$, where $(m,n)\neq (k,\ell)$. Under what conditions is the difference $c_0-c_1$ an algebraic unit?
\end{question}
If $G_{k,\ell}$ is irreducible not only over $\QQ$, but also over $\QQ(c_0)$,
as computational evidence has suggested,
then Question~\ref{question:c1-c2} would be equivalent to the following question.
\begin{question}
	\label{question:G_{m,ell}(c_1)}
Let $m,k\geq 2$, and $n,\ell\geq 1$. Let $c_0$ be a Misiurewicz parameter of type $(m,n)$, and suppose that $(m,n)\neq (k,\ell)$. Under what conditions is $G_{k,\ell}(c_0)$ an algebraic unit?
\end{question}
We answered Question~\ref{question:G_{m,ell}(c_1)} in many cases in \cite{BG1,BG2};
the only remaining case is when $m=k$ and $\ell$ is a proper divisor of $n$.
In \cite[Conjecture 1.3]{BG2}, we conjectured that $G_{m,\ell}(c_0)$ is never an algebraic unit in this case,
and in \cite[Theorem 1.7]{BG2}, we proved this conjecture in the cases $\ell=1,2$,
under the assumption that $G_{m,n}$ is irreducible over $\QQ$.

Our main result in this paper generalizes \cite[Theorem 1.7]{BG2} to many new periods $\ell$, under the same irreducibility assumption.
\begin{thm}
\label{thm:Main result}
Let $m\geq 2$, and $p$ be a prime not exceeding $1024$. Let $n\neq p$ be a positive integer divisible by $p$.
Let $c_0$ be a root of $G_{m,n}$.
Suppose that $G_{m,p}$ and $G_{m,n}$ are irreducible over $\QQ$.
Then $G_{m,p}(c_0)$ is not an algebraic unit. 
\end{thm}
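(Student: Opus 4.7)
The plan is to recast the non-unit claim as a resultant computation and then exhibit a rational prime dividing that resultant. Since $G_{m,n}$ is monic in $\ZZ[c]$ and irreducible over $\QQ$ by hypothesis,
\[
\bigl|\Norm_{\QQ(c_0)/\QQ}(G_{m,p}(c_0))\bigr|
= \bigl|\Res_c(G_{m,n}(c),\, G_{m,p}(c))\bigr|.
\]
Hence $G_{m,p}(c_0)$ is an algebraic unit if and only if this resultant equals $\pm 1$, so the theorem is equivalent to producing a rational prime $q$ such that $G_{m,n}$ and $G_{m,p}$ share a common root in $\overline{\FF_q}$.

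I would interpret such a common root $\bar c \in \overline{\FF_q}$ dynamically. Since $p$ is prime, the definition \eqref{eq:Gdef} collapses to just the factors at $k=1,p$, so vanishing $G_{m,p}(\bar c) \equiv 0 \pmod q$ amounts (after handling the small correction when $p \mid m-1$) to
\[
a_{m+p-1}(\bar c) + a_{m-1}(\bar c) \equiv 0 \pmod q,
\]
i.e.\ the forward orbit of $0$ under $f_{\bar c}$ enters a cycle of length dividing $p$ after $m$ steps. Meanwhile $G_{m,n}(\bar c)\equiv 0 \pmod q$ encodes the Möbius-inverted exact-period-$n$ condition. Because $p \mid n$ with $p \neq n$, these two conditions are incompatible in characteristic $0$ but can become compatible modulo a well-chosen prime $q$, as a \emph{period collapse} from $n$ down to $p$.

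The strategy is then: for each prime $p \leq 1024$, exhibit an explicit witness prime $q=q(p)$ together with a parameter $\bar c \in \overline{\FF_q}$ serving as a common root of $G_{m,n}$ and $G_{m,p}$ uniformly for every admissible pair $(m,n)$ (meaning $m\geq 2$, $p\mid n$, and $n \neq p$). I would pattern this after the $\ell = 1, 2$ arguments of \cite{BG2}, either by isolating an explicit congruence or factorization relating $G_{m,p}$ and an appropriate divisor of $G_{m,n}$ modulo $q(p)$, or by directly exhibiting a parameter in characteristic $q(p)$ realizing the collapsed dynamics in a way whose correctness does not depend on $m$ or $n$ (only on the fact that $p\mid n$).

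The hard part is this last step: producing $q(p)$ and verifying the required common-root statement uniformly in $m$ and $n$. The ceiling $p \leq 1024$ strongly suggests that the witness prime is located case-by-case for each small $p$, presumably via a finite computer search, and that no uniform-in-$p$ conceptual argument is yet known — which is exactly why the theorem cannot yet be stated without a bound on $p$. Once such a witness is in hand, the theorem follows immediately: $q(p)$ divides $\Res_c(G_{m,n}, G_{m,p})$, hence divides the norm of $G_{m,p}(c_0)$, and therefore $G_{m,p}(c_0)$ is not an algebraic unit.
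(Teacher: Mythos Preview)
Your reduction to the resultant $\Res_c(G_{m,n},G_{m,p})$ is correct, and looking for a prime $q$ at which the two polynomials share a root is a sensible reformulation. But the proposal stops exactly where the difficulty lies. For each fixed prime $p$ there are \emph{infinitely many} admissible pairs $(m,n)$, and you give no mechanism for a single witness prime $q(p)$, or even a single parameter $\bar c$ over $\overline{\FF_{q(p)}}$, to certify a common root uniformly in $m$ and $n$. The $\ell=1,2$ cases of \cite{BG2} do not proceed by exhibiting such a uniform $\bar c$; they go through the multiplier polynomial. As written, the proposal is a restatement of the problem plus a hope, not a proof.

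The paper's route is structurally different and explains how the two infinite parameters are tamed. Via Proposition~\ref{prop:implies} the question is transferred from $\Res(G_{m,n},G_{m,p})$ to $\Res(P_{m,p},\Phi_{n/p})$, a resultant of the \emph{multiplier polynomial} against a cyclotomic polynomial. The point of this move is that the dependence on $n$ is now isolated in $\Phi_{n/p}$, and Theorem~\ref{thm:special} kills all $n$ at once: if $P_{m,p}$ is $2$-special, then $\Res(P_{m,p},\Phi_\ell)\neq\pm 1$ for \emph{every} $\ell\geq 1$. So the entire problem collapses to proving that $P_{m,p}$ is $2$-special for all $m\geq 2$. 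The remaining infinite parameter $m$ is then handled by induction: Theorem~\ref{thm:P_{m,n}^2} gives a trace-doubling relation $\tr(P_{m+1,p})=2\tr(P_{m,p})$ once $m$ is large enough, which propagates a bound on $v_2(\tr(P_{m,p}))$ to all $m$, and Theorem~\ref{thm:inductive-2-special} uses that bound to push $2$-specialness from $P_{m,p}$ to $P_{m+1,p}$.

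The cutoff $p\leq 1024$ enters not, as you guessed, through a search for a witness prime $q(p)$, but through a finite Magma check that $v_2(\tr(P_{m,p})) < m + \tfrac{3p}{2}$ holds for $2\leq m\leq 10$; the trace relation then extends this to all $m$. So the computation is a $2$-adic valuation check on coefficients of $P_{m,p}$, not a root-collision search modulo an auxiliary prime.
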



Theorem~\ref{thm:Main result} may be considered a dynamical analogue of a recent result of Li \cite{Li21}
on \emph{singular moduli}, which are $j$-invariants of elliptic curves with CM (complex multiplication).
In their seminal work \cite{GZ84}, Gross and Zagier found an explicit factorization of the algebraic norm
of differences of singular moduli, showing that they have many small prime divisors in many cases. The question of whether these differences can be algebraic units or not has been a long-standing conjecture in arithmetic geometry. In a recent breakthrough, Li \cite{Li21} proved that such differences can never be algebraic units,
and he used this result to generalize the effective results of Andr\'{e}-Oort type.
In the notation of Theorem~\ref{thm:Main result}, if $G_{m,p}$ is irreducible not only over $\QQ$,
but even over $\QQ(c_0)$, then by the resulting equivalence of Question~\ref{question:c1-c2} and
Question~\ref{question:G_{m,ell}(c_1)} mentioned above, Theorem~\ref{thm:Main result}
would yield the corresponding result for such Misiurewicz parameters: that the difference of a root
of $G_{m,n}$ and a root of $G_{m,p}$ cannot be an algebraic unit.

The outline of the paper is as follows. We introduce key terminology including multiplier polynomials and
$p$-specialness in Section~\ref{sec:prelim}, where we also prove several results
about sums of roots of certain polynomials.
(We introduced $p$-specialness in \cite{BG2} to study Question~\ref{question:c1-c2}, but in this
paper we push the concept further, making heavy use of the elementary complex analysis result
of Proposition~\ref{prop:contour}.)
Section~\ref{sec:tracerelate} is devoted to the statement and proof of Theorem~\ref{thm:P_{m,n}^2},
relating the coefficients of certain multiplier polynomials. The proof requires several technical lemmas,
which we continue to make use of Proposition~\ref{prop:contour}.
In Section~\ref{sec:mainproof}, we combine these ingredients to prove Theorem~\ref{thm:Main result}
by inductively showing that the relevant multiplier polynomials are $2$-special.
Finally, Section~\ref{sec:data} is an appendix of some computational data needed
to verify the base cases of Section~\ref{sec:mainproof}.

\section{Preliminaries}
\label{sec:prelim}
If $f\in\CC(z)$ is a rational function, and if $x\in\CC$ is periodic of exact period $n\geq 1$,
the \emph{multiplier} $\lambda\in \CC$ of $x$ is defined by  
\begin{equation}
	\label{eq:multdef}
	\lambda:=\big(f^n\big)'(x) = \prod_{i=0}^{n-1} f'\big(f^i(x)\big) .
\end{equation}
The multiplier $\lambda$ is the same for each point $x, f(x), f^2(x), \ldots, f^{n-1}(x)$
of the forward orbit of $x$, and hence we may refer to the multiplier of the periodic cycle.
Because the multiplier is invariant under conjugation by a linear fractional transformation,
one can also define the multiplier of a periodic point at $x=\infty$ via change of coordinates.

Let $m\geq 2$ and $n \geq 1$. Now consider a root $\alpha$ of $G_{m,n}$.
Since $f_{\alpha}(z)=z^2+\alpha$ has $f'(z)=2z$, 
and since $a_{m+n-1}(\alpha)+a_{m-1}(\alpha)=0$,
equation~\eqref{eq:multdef} shows that the multiplier
of the periodic cycle $\{a_m(\alpha),\dots, a_{m+n-1}(\alpha)\}$ is
\begin{equation}
\label{eq:lambda}
\lambda_{m,n}(\alpha) = 2^n \Bigg(\prod_{i=0}^{n-1}a_{m+i}(\alpha)\Bigg)
= -2^n \Bigg(\prod_{i=0}^{n-1}a_{m+i-1}(\alpha)\Bigg).
\end{equation}
We are thus led to define the following
\emph{multiplier polymomial}, which has the multipliers of type $(m,n)$ as roots.

\begin{defin}
\label{def:multiplier poly}
Let $m\geq 2$ and $n\geq 1$ be integers. Let $\alpha_1,\dots,\alpha_k$ be all the roots of $G_{m,n}$.
The \emph{multiplier polynomial} $P_{m,n}$ associated with $G_{m,n}$ is
\[P_{m,n}(x)= \prod_{j=1}^{k} (x-\lambda_{m,n}(\alpha_j)) \in \ZZ[x], \]
where $\lambda_{m,n}(\cdot)$ is defined as in equation~\eqref{eq:lambda}.
\end{defin}

We will make use of the following two results and one definition from \cite{BG2}.
Throughout this paper, for any prime $p$, we write $v_p$ for the $p$-adic valuation
on $\ZZ$, normalized so that $v_p(p)=1$.

\begin{prop}[\cite{BG2}, Proposition~5.3]
\label{prop:implies}
Let $m,n\geq 2$ be integers. Let $\alpha$ be a root of $G_{m,n}$.
Let $\ell\geq 1$ be a proper divisor of $n$, and suppose that $\Res(P_{m,\ell}, \Phi_{n/\ell})\neq \pm 1$.
If $G_{m,n}$ and $G_{m,\ell}$ are irreducible over $\QQ$, then $G_{m,\ell}(\alpha)$ is not an algebraic unit.
\end{prop}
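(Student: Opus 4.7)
The plan is to argue by contrapositive: assuming that $G_{m,\ell}(\alpha)$ is an algebraic unit, I would show that necessarily $\Res(P_{m,\ell},\Phi_{n/\ell})=\pm 1$, contradicting the hypothesis. First, since $G_{m,n}$ is irreducible over $\QQ$, the Galois conjugates of $\alpha$ are precisely the roots of $G_{m,n}$, so
\[ N_{\QQ(\alpha)/\QQ}\big(G_{m,\ell}(\alpha)\big) \;=\; \pm\,\Res(G_{m,n},G_{m,\ell}) \;=\; \pm\prod_{\beta}G_{m,n}(\beta), \]
with $\beta$ ranging over the roots of $G_{m,\ell}$. As the norm of a unit, this quantity equals $\pm 1$.

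The main ingredient is a formula expressing each $G_{m,n}(\beta)$ in terms of the multiplier $\lambda:=\lambda_{m,\ell}(\beta)$. Setting $q:=n/\ell>1$ and $g_k(c):=a_{m+k-1}(c)+a_{m-1}(c)$, I would Taylor-expand $G_{m,n}(c)=\prod_{k\mid n}g_k(c)^{\mu(n/k)}$ near $c=\beta$. The factorization $a_{m+\ell j}(c)-a_m(c) = g_{\ell j}(c)\cdot(a_{m+\ell j-1}(c)-a_{m-1}(c))$, combined with the recursive chain-rule computation
\[ a_{m+\ell j}'(\beta)-a_m'(\beta) \;=\; \phi'(\beta)\cdot\bigl(1+\lambda+\cdots+\lambda^{j-1}\bigr), \qquad \phi(c):=a_{m+\ell}(c)-a_m(c), \]
gives $g_{\ell j}(c) = K_\beta(\lambda^j-1)(c-\beta)+O((c-\beta)^2)$ for a constant $K_\beta$ independent of $j$. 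Substituting into the Möbius product and using $\sum_{j\mid q}\mu(q/j)=0$ to cancel both the $(c-\beta)$- and $K_\beta$-factors, I obtain the polynomial identity
\[ \prod_{d\mid\ell,\ \gcd(q,\ell/d)=1} G_{m,qd}(c) \;=\; \prod_{j\mid q} g_{\ell j}(c)^{\mu(q/j)}, \]
which evaluated at $c=\beta$ yields $\prod_d G_{m,qd}(\beta) = \Phi_q\bigl(\lambda_{m,\ell}(\beta)\bigr)$. Taking the product over the Galois-conjugate $\beta$ and using that $P_{m,\ell}$ is monic, this becomes the integer identity
\[ \prod_{d\mid\ell,\ \gcd(q,\ell/d)=1}\Res(G_{m,\ell},G_{m,qd}) \;=\; \pm\,\Res(P_{m,\ell},\Phi_{n/\ell}), \]
in which the $d=\ell$ term equals $\Res(G_{m,\ell},G_{m,n})$ and must equal $\pm 1$ by the first step.

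In the cleanest situation $\operatorname{rad}(\ell)\mid q$ (which subsumes the setting $\ell=p$ prime, $p^2\mid n$ needed in Theorem~\ref{thm:Main result}), the only divisor $d\mid\ell$ with $\gcd(q,\ell/d)=1$ is $d=\ell$, so the identity collapses to $\Res(G_{m,\ell},G_{m,n}) = \pm\Res(P_{m,\ell},\Phi_{n/\ell})$ and the contradiction is immediate. The main obstacle is the general case, in which extra factors $\Res(G_{m,\ell},G_{m,qd})$ for $d<\ell$ appear and the unit hypothesis does not pin them down. The hard part will be ensuring that any prime $p\mid\Res(P_{m,\ell},\Phi_{n/\ell})$ is forced into the $d=\ell$ factor rather than into an extra one; I would handle this by a local ($p$-adic) argument. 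Such a prime $p$ corresponds, via some root $\beta$ and prime $\frP\mid p$ in $\ZZ[\beta]$, to $\lambda_{m,\ell}(\beta)\bmod\frP$ being a primitive $q$-th root of unity; a Hensel- or Newton-polygon-style argument applied to $G_{m,n}$ near $\beta$ then produces a root $\alpha$ of $G_{m,n}$ satisfying $\alpha\equiv\beta\pmod{\frP}$, forcing $p\mid \Res(G_{m,n},G_{m,\ell})$ and the desired contradiction with $\Res(G_{m,n},G_{m,\ell})=\pm 1$.
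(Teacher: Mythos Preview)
The present paper does not prove this proposition; it simply quotes it from \cite{BG2}, so there is no in-paper proof to compare against. I can, however, assess your proposal on its own merits.

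Your core identity
\[
\prod_{\substack{d\mid\ell \\ \gcd(q,\ell/d)=1}} G_{m,qd}(c)
\;=\;
\prod_{j\mid q} g_{\ell j}(c)^{\mu(q/j)}
\]
is correct (modulo the correction factors $\prod_{k\mid qd} a_k^{-\mu(qd/k)}$ that enter when $qd\mid m-1$, which you have suppressed), and your derivative computation showing the right-hand side evaluates to $\Phi_q(\lambda_{m,\ell}(\beta))$ at a root $\beta$ of $G_{m,\ell}$ is sound, once one knows $|\lambda_{m,\ell}(\beta)|>1$ so that the zeros of $g_{\ell j}$ at $\beta$ are simple. In the collapsed case $\operatorname{rad}(\ell)\mid q$ this gives a complete argument. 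Two genuine gaps remain, however. First, your parenthetical that Theorem~\ref{thm:Main result} only needs $p^2\mid n$ is wrong: the theorem is stated for \emph{all} $n$ with $p\mid n$, $n\neq p$, so the case $p\,\|\,n$ (hence $\gcd(q,p)=1$, forcing the extra factor $G_{m,q}$) is required. Second, and more seriously, your Hensel sketch for the general case does not go through as written: from $\Phi_q(\lambda)\equiv 0\pmod{\frP}$ you only learn that \emph{some} factor $G_{m,qd}(\beta)$ lies in $\frP$, not that $G_{m,n}(\beta)$ itself does, so there is no starting point for a Hensel lift to a root of $G_{m,n}$ specifically. The cleanest way to close this gap is not a lifting argument at all, but rather to show that the extra resultants $\Res(G_{m,\ell},G_{m,qd})$ for $d<\ell$ are already $\pm 1$: since for such $d$ one has $\ell\nmid qd$ and $qd\nmid \ell$, this falls outside the ``remaining case'' and is handled by the earlier results of \cite{BG1,BG2} that you would need to invoke.
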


\begin{defin}[\cite{BG2}, Definition~6.1]
\label{def:special}
Let $P(x)=x^i+A_{i-1}x^{i-1}+\dots+A_1x+A_0\in\ZZ[x]$ be a monic polynomial with integer coefficients.
Let $p$ be a prime number.
We say that $P(x)$ is \emph{p-special} if it satisfies the following two properties:
\begin{itemize}
	\item $v_p(A_{i-1})> v_p(2)$, and
	\item $v_p(A_j)>v_p(A_{i-1})$ for $j=0,1,\dots,i-2$.
\end{itemize}
\end{defin}

\begin{thm}[\cite{BG2}, Theorem~6.2]
\label{thm:special}
Let $P(x)\in \ZZ[x]$ be a $p$-special polynomial for some prime $p$.
Then for every integer $\ell \geq 1$, we have $|\Res(P,\Phi_{\ell})|>1$,
where $\Phi_{\ell}(x)\in \ZZ[x]$ is the $\ell$-th cyclotomic polynomial.
\end{thm}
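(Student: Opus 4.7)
The plan is to work $p$-adically, showing that $\Res(P,\Phi_\ell)$ is incongruent to $\pm 1$ modulo a suitable power of $p$, and hence is not $\pm 1$ as an integer. Set $v := v_p(A_{i-1})$; the $p$-special hypotheses give $v > v_p(2) \geq 0$, so $v \geq 1$ (and $v \geq 2$ when $p = 2$), and $v_p(A_j) \geq v+1$ for every $j < i-1$. These divisibilities immediately yield the polynomial congruence
\[
P(x) \equiv x^{i-1}(x+A_{i-1}) \pmod{p^{v+1}},
\]
and then, by multiplicativity of the resultant,
\[
\Res(P,\Phi_\ell) \equiv \pm\,\Phi_\ell(0)^{i-1}\,\Phi_\ell(-A_{i-1}) \pmod{p^{v+1}}.
\]

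Next I would Taylor-expand $\Phi_\ell$ at $0$: since $v_p(A_{i-1}^j) = jv \geq v+1$ for $j \geq 2$, only the constant and linear terms survive modulo $p^{v+1}$. Recalling $\Phi_1(0) = -1$, $\Phi_\ell(0) = 1$ for $\ell \geq 2$, and the identity $\Phi_\ell'(0) = -\mu(\ell)$ for $\ell \geq 2$ (with $\mu$ the M\"obius function; this follows from $\prod_\zeta \zeta = 1$ and $\sum_\zeta \zeta = \mu(\ell)$), a short computation yields the uniform congruence
\[
\Res(P,\Phi_\ell) \equiv \pm\bigl(1 + \mu(\ell)\,A_{i-1}\bigr) \pmod{p^{v+1}}
\]
valid for every $\ell \geq 1$. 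Reducing mod $p$ already gives $\Res \equiv \pm 1 \pmod p$, so in particular $\Res \neq 0$.

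Now suppose for contradiction that $|\Res(P,\Phi_\ell)| = 1$. The two possibilities are $\mu(\ell)A_{i-1} \equiv 0 \pmod{p^{v+1}}$ or $\mu(\ell)A_{i-1} \equiv -2 \pmod{p^{v+1}}$. The second is ruled out by a $p$-adic valuation comparison: $v_p(\mu(\ell)A_{i-1}) \geq v > v_p(2) = v_p(-2)$ forces $v_p(\mu(\ell)A_{i-1} + 2) = v_p(2) < v+1$. The first forces $v_p(\mu(\ell)) \geq 1$; but $\mu(\ell) \in \{-1,0,1\}$, so this means $\mu(\ell) = 0$, i.e.\ $\ell$ is not squarefree. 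Hence the theorem is established whenever $\ell$ is squarefree (including $\ell = 1$, since $\mu(1) = 1$).

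The main obstacle is the residual case of non-squarefree $\ell$, where $\Phi_\ell'(0) = 0$ and the linear Taylor expansion is uninformative modulo $p^{v+1}$. To treat these, I would write $\ell = \operatorname{rad}(\ell)\cdot s$ with $s > 1$ (where $\operatorname{rad}$ denotes the squarefree radical) and invoke the identity $\Phi_\ell(x) = \Phi_{\operatorname{rad}(\ell)}(x^{s})$ to rewrite $\Phi_\ell(-A_{i-1}) = \Phi_{\operatorname{rad}(\ell)}\bigl((-A_{i-1})^{s}\bigr)$. Since $(-A_{i-1})^s$ has $p$-adic valuation $sv \gg v$, a refined version of the preceding analysis applied at a larger $p$-adic precision should again rule out $|\Res| = 1$. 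The delicate point is controlling the contribution of the lower-degree coefficients $A_j$ (whose divisibility is only $p^{v+1}$) when pushing the approximation of $P$ itself beyond modulo $p^{v+1}$.
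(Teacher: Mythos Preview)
The paper does not prove this theorem; it is quoted from \cite{BG2}, so there is no in-paper argument to compare against. I will therefore assess your attempt on its own merits.

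Your argument for squarefree $\ell$ is correct. The congruence $P(x)\equiv x^{i-1}(x+A_{i-1})\pmod{p^{v+1}}$ does transfer to the resultant (the Sylvester determinant is polynomial in the coefficients), your identification $\Phi_\ell'(0)=-\mu(\ell)$ for $\ell\ge 2$ is right, and the dichotomy ruling out $\mu(\ell)A_{i-1}\equiv 0$ or $-2\pmod{p^{v+1}}$ is clean.

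The non-squarefree case, however, is a genuine gap, and your proposed patch does not close it. Using $\Phi_\ell(x)=\Phi_{\operatorname{rad}(\ell)}(x^{s})$ lets you show that $\Phi_\ell(-A_{i-1})\equiv 1\pmod{p^{sv}}$, but this is information about only one factor of your approximation. To exploit the term $\mu(\operatorname{rad}(\ell))(-A_{i-1})^{s}$, which lives at valuation $sv\ge 2v$, you would need the congruence for $\Res(P,\Phi_\ell)$ itself to hold modulo $p^{sv+1}$. That requires knowing $P$ modulo $p^{sv+1}$, whereas the $p$-special hypothesis only pins down $P$ modulo $p^{v+1}$: the coefficients $A_0,\dots,A_{i-2}$ enter the resultant at precision $p^{v+1}$ via terms like $\sum_\zeta A_j\zeta^{j}/Q(\zeta)$, and their contributions are of the same (or lower) $p$-adic size as the term you hope to isolate. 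Concretely, writing $\Res(P,\Phi_\ell)=\pm(1+\delta_1)(1+\delta_2)$ with $\delta_1$ coming from $\Phi_\ell(-A_{i-1})-1$ (valuation $\ge sv$) and $\delta_2$ coming from the lower coefficients (valuation $\ge v+1$), you can exclude the ``$-2$'' branch but cannot exclude $\delta_1+\delta_2+\delta_1\delta_2=0$ from valuation considerations alone. So the sketch, as it stands, does not constitute a proof; something further---either a finer control of the $A_j$-contributions or a different mechanism altogether---is needed for non-squarefree $\ell$.
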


%

Our next result will be useful for proving that multiplier polynomials are $2$-special.
In particular, it shows that for $p=2$,
the first bullet point of Definition~\ref{def:special} holds for multiplier polynomials,
and it provides a useful lower bound for the valuations that arise in the second bullet point.

\begin{prop}
\label{prop:prespecial}
Let $m\geq 2$ and $n\geq 1$ be integers, and let $P_{m,n}(x)=x^k + \sum_{i=0}^{k-1} b_i x^i$
be the associated multiplier polynomial. Then for each $\ell =1,\ldots, k$, we have
\[ v_2(b_{k-\ell})\geq n\ell+1 . \]
\end{prop}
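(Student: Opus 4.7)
The plan is to separate out the trivial power of $2$ coming from the defining formula for $\lambda_{m,n}$ and then gain one extra factor of $2$ via a characteristic-$2$ argument. Writing $\lambda_j := \lambda_{m,n}(\alpha_j) = 2^n \mu_j$ with $\mu_j := \prod_{i=0}^{n-1} a_{m+i}(\alpha_j)$ an algebraic integer, Vieta's formulas give
\[
b_{k-\ell} = (-1)^\ell\, 2^{n\ell}\, e_\ell(\mu_1,\dots,\mu_k),
\]
so it suffices to show $e_\ell(\mu_1,\dots,\mu_k)$ is even for every $\ell = 1,\dots,k$. I would rephrase this via the integer polynomial $Q(x) := \prod_j (x - \mu_j) \in \ZZ[x]$ (it has integer coefficients because $\mu(c) := \prod_{i=0}^{n-1} a_{m+i}(c) \in \ZZ[c]$ and $Q(x) = \Res_c\bigl(G_{m,n}(c),\, x - \mu(c)\bigr)$): by unique factorization in $\Fbar_2[x]$, the congruence $Q(x) \equiv x^k \pmod 2$ is equivalent to the assertion that every $\mu_j$ reduces to $0$ modulo every prime $\mathfrak{P}$ above $2$ in a splitting field.

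To prove this vanishing, fix such a prime $\mathfrak{P}$ and work in $\Fbar_2$, writing bars for reduction. The Misiurewicz relation $a_{m+n-1}(\alpha_j) + a_{m-1}(\alpha_j) = 0$ becomes $\bar a_{m-1} = \bar a_{m+n-1}$ in characteristic $2$. The crucial observation is that $z \mapsto z^2 + \bar\alpha_j$ is \emph{injective} on $\Fbar_2$, since squaring is the Frobenius endomorphism; hence
\[
f_{\bar\alpha_j}(\bar a_{m-2}) \;=\; \bar a_{m-1} \;=\; \bar a_{m+n-1} \;=\; f_{\bar\alpha_j}(\bar a_{m+n-2})
\]
forces $\bar a_{m-2} = \bar a_{m+n-2}$. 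Iterating this backward $m-1$ times gives $\bar a_i = \bar a_{i+n}$ for $0 \leq i \leq m-1$, and combining with the forward propagation $\bar a_{i+n} = \bar a_i$ for $i \geq m-1$ yields $\bar a_i = \bar a_{i+n}$ for every $i \geq 0$. In particular $\bar a_n = \bar a_0 = 0$, and the set $\{\bar a_m,\dots,\bar a_{m+n-1}\}$ coincides with $\{\bar a_0,\dots,\bar a_{n-1}\}$, which contains $0$. Therefore some factor of $\mu_j$ lies in $\mathfrak{P}$, so $\bar\mu_j = 0$.

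The main (and really only) obstacle is arranging the backward propagation via Frobenius injectivity; once that step is in place the rest is bookkeeping. Putting it together, $Q(x) \equiv x^k \pmod 2$, every $e_\ell(\mu_1,\dots,\mu_k)$ is even, and the factor of $2^{n\ell}$ from the rescaling yields the desired bound $v_2(b_{k-\ell}) \geq n\ell + 1$.
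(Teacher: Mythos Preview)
Your proof is correct and follows the same overall strategy as the paper: show that each multiplier satisfies $v_2(\lambda_j) > n$ (equivalently, each $\mu_j$ lies in every prime above $2$), and then conclude via the homogeneity of $b_{k-\ell}$ in the $\lambda_j$'s together with integrality. The only real difference is in how the key step is justified. The paper cites \cite[Theorem~1.3]{Gok19} (or \cite[Theorem~3.3]{BG1}) for the fact that $v_2(a_j(\alpha_i))>0$ whenever $n\mid j$, and then uses the form $\lambda_i = -2^n\prod_{j=0}^{n-1}a_{m+j-1}(\alpha_i)$ to pick out the index divisible by $n$. You instead supply a direct characteristic-$2$ argument, using the injectivity of $z\mapsto z^2+\bar\alpha_j$ on $\Fbar_2$ to propagate the relation $\bar a_{m-1}=\bar a_{m+n-1}$ backward to $\bar a_0=\bar a_n=0$, which is essentially the content of the cited results. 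So your version is self-contained where the paper's is not, but the mathematical substance is the same.
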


\begin{proof}
Let $\alpha_1,\ldots,\alpha_k$ be the roots of $G_{m,n}$. Since $G_{m,n}(c) \in\ZZ[c]$ is monic,
each $\alpha_i$ is an algebraic integer. Because $a_j(c)\in\ZZ[c]$ for each $j\geq 1$,
it follows that each $a_j(\alpha_i)$ is also an algebraic integer,
so that $v_2(a_j(\alpha_i))\geq 0$, for any extension of the valuation $v_2$ to $\QQ(\alpha_i)$.

Furthermore, for any $j$ divisible by $n$, we have $v_2(a_j(\alpha_i))>0$,
by \cite[Theorem~1.3]{Gok19} (or by \cite[Theorem~3.3]{BG1}).
Thus, the multiplier $\lambda_i=\lambda_{m,n}(\alpha_i)$ satisfies
\[ v_2(\lambda_i) = v_2\bigg( 2^n \prod_{j=0}^{n-1}a_{m+j-1}(\alpha_i)\bigg) > n v_2(2)\]
by equation~\eqref{eq:lambda}, together with the fact that $n|(m+j-1)$ for some $0\leq j\leq n-1$.
By the definition of $P_{m,n}$, it follows that $v_2(b_{k-\ell}) > n\ell v_2(2)$,
since $b_{k-\ell}$ is a homogeneous polynomial in $\lambda_1,\ldots,\lambda_k$ of degree $\ell$.
Finally, because $b_{k-\ell}\in\ZZ$, its $2$-adic valuation is an integer,
and therefore $v_2(b_{k-\ell})\geq n\ell+1$.
\end{proof}

The following definition will be handy in the computations throughout the paper. 

\begin{defin}
\label{def:T(f,g)}
Let $K$ be a field, and let $f,g\in K[x]$ with $g\neq 0$.
Define $T(f,g)\in K$ to be the sum of $f(c_i)$ over all the roots $c_i$ of $g$,
repeated according to multiplicity.
\end{defin}

The following lemma establishes some basic properties of $T(f,g)$.

\begin{lemma}
\label{lemma:properties of T}
Let $K$ be a field. Then:
\begin{enumerate}[label=(\alph*)]
	\item T is linear in the first coordinate, i.e.,
	\[ T(af,h)=aT(f,h) \quad \text{and}\quad T(f+g,h)=T(f,h)+T(g,h) \]
	for any $a\in K$ and $f,g,h\in K[x]$ with $h\neq 0$.
	\item $T(f,gh)=T(f,g)+T(f,h)$ for any $f,g,h\in K[x]$ with $g, h\neq 0$.
\end{enumerate}
\end{lemma}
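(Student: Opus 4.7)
The plan is to work in an algebraic closure $\overline{K}$ of $K$, in which every nonzero polynomial in $K[x]$ splits into linear factors. For nonzero $g\in K[x]$ of degree $d$, I will write its roots in $\overline{K}$ as $c_1,\ldots,c_d$, counted with multiplicity, so that by definition $T(f,g)=\sum_{i=1}^{d} f(c_i)$. A preliminary observation is that although the individual summands lie in $\overline{K}$, the sum is symmetric in the $c_i$ and hence is a polynomial expression in the elementary symmetric functions of the roots, i.e.\ in the coefficients of $g$ (and the coefficients of $f$); this is what makes $T(f,g)\in K$ and ensures the definition is well posed.

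For part (a), I would fix the roots $c_1,\ldots,c_d$ of the polynomial $h$ in $\overline{K}$ and invoke linearity of polynomial evaluation: for any $a\in K$ and $f,g\in K[x]$ we have $(af)(c_i)=a\,f(c_i)$ and $(f+g)(c_i)=f(c_i)+g(c_i)$. Summing over $i$ and using ordinary linearity of finite sums in $\overline{K}$ yields both $T(af,h)=aT(f,h)$ and $T(f+g,h)=T(f,h)+T(g,h)$ immediately.

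For part (b), the key observation is that the multiplicity of any $\alpha\in\overline{K}$ as a root of the product $gh$ equals its multiplicity as a root of $g$ plus its multiplicity as a root of $h$. Consequently, the multiset of roots of $gh$ (in $\overline{K}$, with multiplicity) is the disjoint union of the multiset of roots of $g$ and the multiset of roots of $h$. Partitioning the defining sum for $T(f,gh)$ along this decomposition yields $T(f,gh)=T(f,g)+T(f,h)$ at once.

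There is really no substantive obstacle here: the lemma is a bookkeeping statement, and both identities drop out of the definition the moment one fixes the convention that roots are counted in $\overline{K}$ with multiplicity. The only mildly subtle point is the well-definedness remark at the outset, handled once and for all by the fundamental theorem on symmetric polynomials.
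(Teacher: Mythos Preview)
Your proposal is correct and follows the same approach as the paper, which simply records that both parts are immediate from the definition of $T(f,g)$. You have merely unpacked this in slightly more detail (working in $\overline{K}$, invoking linearity of evaluation, and noting that root multisets of a product are the disjoint union), but the underlying argument is identical.
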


\begin{proof}
Immediate from the definition.
\end{proof}

Our next result, an elementary fact from complex analysis, will be essential in many proofs in this section.

\begin{prop}
	\label{prop:contour}
	Let $f,g \in \CC[x]$ be polynomials such that $f(0) = g(0) = 0$, and $g'(0)\neq 0$.
	Let $0=c_1,c_2,\dots, c_k$ be the roots of $g$, and assume that none are multiple roots.
	Let $h\in \CC[x]$ be the remainder when $fg'$ is divided by the polynomial $g(x)/x$,
	i.e., $\deg h < (\deg g) - 1$ and $h\equiv fg' \pmod{g/x}$. Then
	\[ T(f,g) = -\frac{h(0)}{g'(0)}. \]
\end{prop}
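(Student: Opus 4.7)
The plan is to interpret $T(f,g)$ as a sum of residues of a well-chosen meromorphic function and exploit the fact that the sum of residues of a proper rational function over $\CC\cup\{\infty\}$ is zero. Since $f(0)=0$ and $c_1=0$, the term $f(c_1)$ drops out, so $T(f,g)=\sum_{i=2}^{k} f(c_i)$. My first step is to re-express each summand $f(c_i)$ in terms of the remainder polynomial $h$. Writing $fg' = q\cdot(g/x) + h$ for some quotient $q\in\CC[x]$ and evaluating at any $c_i$ with $i\geq 2$, the factor $g(c_i)/c_i$ vanishes (since $g(c_i)=0$ and $c_i\neq 0$), so $h(c_i)=f(c_i)g'(c_i)$; because $g$ has simple roots, $g'(c_i)\neq 0$, and I obtain $f(c_i)=h(c_i)/g'(c_i)$ for $i=2,\dots,k$.

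Next, I consider the rational function $R(x)=h(x)/g(x)$. By hypothesis $\deg h < \deg(g/x) = k-1$, so $\deg h \leq k-2$, whereas $\deg g = k$; hence $R(x)=O(|x|^{-2})$ as $|x|\to\infty$. Since all roots of $g$ are simple, $R$ has simple poles at each $c_i$ with residue $h(c_i)/g'(c_i)$. Integrating $R$ around a large circle $|x|=r$ and letting $r\to\infty$ gives $\oint R(x)\,dx\to 0$, so by the residue theorem
\[
\sum_{i=1}^{k}\frac{h(c_i)}{g'(c_i)}=0.
\]

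Finally, I isolate the $i=1$ term: $h(c_1)/g'(c_1) = h(0)/g'(0)$, while the remaining $i\geq 2$ terms sum to $\sum_{i=2}^{k} f(c_i) = T(f,g)$ by the first step. Rearranging yields $T(f,g) = -h(0)/g'(0)$, as required. I do not anticipate a serious obstacle here: the only subtlety is the degree bookkeeping that justifies the vanishing contour integral, which is precisely why the problem is set up with division by $g(x)/x$ rather than by $g(x)$ itself — this guarantees $\deg h\leq k-2$ and hence the necessary $O(|x|^{-2})$ decay at infinity.
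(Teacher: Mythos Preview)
Your proof is correct and follows essentially the same approach as the paper's own proof: both compute $T(f,g)$ by applying the residue theorem to $h(z)/g(z)$, using the degree bound $\deg h \leq \deg g - 2$ to kill the contour integral at infinity, and then identifying $h(c_i)/g'(c_i)=f(c_i)$ for $i\geq 2$ via the division relation $fg' = q\cdot(g/x)+h$. The only difference is cosmetic ordering of the steps.
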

\begin{proof}
	For any large enough radius $R>0$, we know from complex analysis that
	\begin{equation}
		\label{eq:resfmla}
		\frac{1}{2\pi i} \oint_{|z|=R} \frac{h(z)\, dz}{g(z)} = \sum_{i=1}^k \resid\bigg(\frac{h}{g} , c_i \bigg),
	\end{equation}
	where $\resid(F,c)$ denotes the residue of $F(z)$ at $z=c$.
	Since $g$ has only simple roots, we have $\resid(h/g,c_i) = h(c_i)/g'(c_i)$.
	On the other hand, for sufficiently large $|z|$, we have $|h(z)/g(z)| = O(|z|^{-2})$,
	because $\deg g - \deg h \geq 2$.
	Thus, taking the limit of equation~\eqref{eq:resfmla} as $R\to\infty$, we obtain
	\begin{equation}
		\label{eq:11}
		\sum_{i=1}^{k} \frac{h(c_i)}{g'(c_i)} = 0.
	\end{equation}
	
	By our choice of $h$, we have $h(c_i) / g'(c_i) = f(c_i)$ for $i=2,3,\dots,k$. Hence, we obtain
	\[ \sum_{i=2}^{k} \frac{h(c_i)}{g'(c_i)} = \sum_{i=2}^{k} f(c_i). \]
	Because $f(c_1) = f(0) = 0$ and using (\ref{eq:11}), the desired identity follows immediately.
\end{proof}

Recall that the trace $\tr(P)$ of a monic polynomial $P(x)$ of degree $n\geq 1$
is the sum of its roots, or equivalently, the negative of the $x^{n-1}$-coefficient of $P$.
Our next result provides an explicit formula for the trace of the multiplier polynomial $P_{m,n}$.

\begin{lemma}
\label{lemma:Mobius sum for trace}
Let $m\geq 2$, $n\geq 1$. Then
\[\tr(P_{m,n}) = 2^n\sum_{d|n} (-1)^{n/d}
\mu\bigg(\frac{n}{d}\bigg)T\Bigg(\bigg(\prod_{i=m-1}^{m+d-2}a_i\bigg)^{n/d}, a_{m+d-1}+a_{m-1}\Bigg).\]

\end{lemma}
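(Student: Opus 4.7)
The plan is to prove the trace formula by Möbius inversion over the divisors of $n$, using the fact that the multiplier $\lambda_{m,n}$ extends to a polynomial in $c$ and that roots of the auxiliary polynomial $F_d(c) := a_{m+d-1}(c) + a_{m-1}(c)$ correspond to parameters whose post-critical orbit past $a_m$ has period dividing $d$.

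First I would introduce the polynomial $L(c) := 2^n \prod_{i=0}^{n-1} a_{m+i}(c) \in \ZZ[c]$, which by equation~\eqref{eq:lambda} satisfies $L(\alpha) = \lambda_{m,n}(\alpha)$ at every root $\alpha$ of $G_{m,n}$. Consequently $\tr(P_{m,n}) = T(L, G_{m,n})$ by Definitions~\ref{def:multiplier poly} and~\ref{def:T(f,g)}.

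Second, for each $d \mid n$ and each root $\alpha$ of $F_d$, the identity $a_{m+d-1}(\alpha) = -a_{m-1}(\alpha)$ forces $a_{m+d}(\alpha) = a_m(\alpha)$, so the orbit of $a_m(\alpha)$ is periodic of period dividing $d$. This lets me rewrite
\[\prod_{i=0}^{n-1} a_{m+i}(\alpha) = \bigg(\prod_{i=0}^{d-1} a_{m+i}(\alpha)\bigg)^{n/d} = (-1)^{n/d}\bigg(\prod_{i=m-1}^{m+d-2} a_i(\alpha)\bigg)^{n/d},\]
where the last equality uses $a_{m-1}(\alpha) = -a_{m+d-1}(\alpha)$. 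Summing over roots of $F_d$ and applying Lemma~\ref{lemma:properties of T} yields
\[T(L, F_d) = 2^n (-1)^{n/d}\, T\bigg(\bigg(\prod_{i=m-1}^{m+d-2} a_i\bigg)^{n/d},\, F_d\bigg).\]

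Third, I would establish the divisor identity $T(L, F_d) = \sum_{d' \mid d} T(L, G_{m,d'})$. Möbius-inverting equation~\eqref{eq:Gdef} expresses $F_d$ as $\prod_{d' \mid d} G_{m,d'}$ times a correction factor built from orbit values $a_k$ for $k \mid d$ (arising from the $d \mid m-1$ case of~\eqref{eq:Gdef}). At any root of the correction, some $a_k$ vanishes, which makes $0$ periodic of period dividing $k \mid n$; the product $\prod_{i=0}^{n-1} a_{m+i}$ then picks up a zero factor and $L$ vanishes. By Lemma~\ref{lemma:properties of T}(b), the correction contributes nothing, yielding the claimed identity. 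Standard Möbius inversion on the divisor lattice of $n$ then gives $\tr(P_{m,n}) = T(L, G_{m,n}) = \sum_{d \mid n} \mu(n/d)\, T(L, F_d)$, and substituting the second-step expression produces the stated formula.

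The main obstacle is the third step, namely handling the correction factor that appears when $d \mid m-1$ and verifying that $L$ vanishes on its zero set; once that bookkeeping is in place, the rest is formal Möbius inversion. Proposition~\ref{prop:contour} is not needed here, but it will be crucial in the subsequent lemmas that use this trace formula to pin down the coefficients of $P_{m,n}$.
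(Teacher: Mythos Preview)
Your proposal is correct and follows essentially the same route as the paper. The paper substitutes the product formula~\eqref{eq:Gdef} for $G_{m,n}$ directly into $T(L,G_{m,n})$ and uses Lemma~\ref{lemma:properties of T}(b) to expand into the M\"obius sum in one step (splitting into the cases $n\nmid m-1$ and $n\mid m-1$), whereas you first establish the forward identity $T(L,F_d)=\sum_{d'\mid d}T(L,G_{m,d'})$ and then M\"obius-invert; but the substantive ingredients---periodicity at roots of $F_d$ to produce the factor $(-1)^{n/d}$ and the shortened product, and vanishing of $L$ at roots of the $a_k$ correction---are identical.
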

\begin{proof}
First assume that $n\nmid m-1$. Then we have
\begin{align*}
\tr(P_{m,n}) &= T\bigg(2^n\prod_{i=m}^{m+n-1}a_i, G_{m,n}\bigg)\\
&= 2^n T\bigg(\prod_{i=m}^{m+n-1}a_i, \prod_{d|n} (a_{m+d-1}+a_{m-1})^{\mu(n/d)}\bigg)\\
&= 2^n\sum_{d|n}\mu\bigg(\frac{n}{d}\bigg)T\bigg(\prod_{i=m}^{m+n-1}a_i, a_{m+d-1}+a_{m-1}\bigg)\\
&= 2^n\sum_{d|n} (-1)^{n/d}\mu\bigg(\frac{n}{d}\bigg)
T\Bigg(\bigg(\prod_{i=m-1}^{m+d-2}a_i\bigg)^{n/d}, a_{m+d-1}+a_{m-1}\Bigg),
\end{align*}
where we used Lemma~\ref{lemma:properties of T}(a) in the second equality and Lemma~\ref{lemma:properties of T}(b) in the third. The fourth equality also follows by periodicity and the fact that for any root $c$ of $a_{m+d-1}+a_{m-1}$, we have $a_{m+d-1}(c)=-a_{m-1}(c)$. Thus, we are done in this case.\par

We now assume $n | m-1$. By the first part of the proof and using Lemma~\ref{lemma:properties of T}(b), we have
\begin{align*}
	\tr(P_{m,n}) &= T\bigg(2^n\prod_{i=m}^{m+n-1}a_i, G_{m,n}\bigg)\\
	&= 2^n \Bigg(T\bigg(\prod_{i=m}^{m+n-1}a_i, \prod_{d|n} (a_{m+d-1}+a_{m-1})^{\mu(n/d)}\bigg)
	-T\bigg(\prod_{i=m}^{m+n-1}a_i, \prod_{d|n} a_d^{\mu(n/d)}\bigg)\Bigg)\\
	&= 2^n\sum_{d|n} \mu\bigg(\frac{n}{d}\bigg)
	\Bigg(T\bigg(\prod_{i=m}^{m+n-1}a_i, a_{m+d-1}+a_{m-1}\bigg)-T\bigg(\prod_{i=m}^{m+n-1}a_i, a_d\bigg)\Bigg)\\
	&= 2^n\sum_{d|n} (-1)^{n/d}\mu\bigg(\frac{n}{d}\bigg)
	T\Bigg(\bigg(\prod_{i=m-1}^{m+d-2}a_i\bigg)^{n/d}, a_{m+d-1}+a_{m-1}\Bigg).
\end{align*}
Note that the fourth equality follows because for any root $c$ of $a_d$, by periodicity, $a_{dk}(c)=0$ for any $k\geq 1$, thus $a_i(c)=0$ for some $i\in \{m, m+1, \dots, m+n-1\}$.
\end{proof}
Since the case that $n=p$ is a prime is of particular interest for our main result, we now use Lemma~\ref{lemma:Mobius sum for trace} to describe $\tr(P_{m,p})$ for any odd prime $p$.

\begin{thm}
\label{thm:Trace(P_{m,p})}
Let $m\geq 3$, and let $p\geq 3$ be an odd prime number. Then
\[\tr(P_{m,p}) =
2^p\big(T(a_{m-1}^p, a_m+a_{m-1})+2^{m+p-2}-2^{m-2}\big) .\]
\end{thm}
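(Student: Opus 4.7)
The plan is to specialize Lemma~\ref{lemma:Mobius sum for trace} to $n=p$ and evaluate the remaining $T$-value via Proposition~\ref{prop:contour}. Since $p$ is prime, the Möbius sum collapses to two terms: the $d=1$ contribution, with coefficient $(-1)^p\mu(p)=1$, equals $2^p\,T(a_{m-1}^p,a_m+a_{m-1})$, exactly matching the first summand of the target; the $d=p$ contribution, with coefficient $(-1)^1\mu(1)=-1$, equals $-2^p\,T(\phi, F)$, where I write $\phi := \prod_{i=m-1}^{m+p-2} a_i$ and $F := a_{m+p-1} + a_{m-1}$. The theorem thereby reduces to the identity
\[
T(\phi,F) \;=\; 2^{m-2} - 2^{m+p-2},
\]
which, since $\deg\phi = 2^{m+p-2} - 2^{m-2}$, is equivalent to the clean statement $T(\phi,F) = -\deg\phi$.

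To apply Proposition~\ref{prop:contour}, I would first verify by induction from $a_{i+1} = a_i^2 + c$ that $a_i(0) = 0$ for $i\geq 0$ and $a_i'(0) = 1$ for $i\geq 1$; hence $\phi(0) = F(0) = 0$ and $F'(0) = 2 \neq 0$. Assuming $F$ has simple roots (which should be generic; otherwise one passes to a squarefree factor), Proposition~\ref{prop:contour} gives $T(\phi,F) = -h(0)/2$ where $h := \phi F' \bmod (F/c)$. Equivalently, and more tractably, by residue at infinity, $T(\phi,F)$ equals the coefficient of $c^{-1}$ in the Laurent expansion of $\phi F'/F$ at $c=\infty$. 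A natural decomposition of the logarithmic derivative is
\[
\frac{F'}{F} \;=\; \frac{a_{m+p-1}'}{a_{m+p-1}} \;+\; \frac{W}{a_{m+p-1}\,F}, \qquad W := a_{m-1}'\,a_{m+p-1} - a_{m-1}\,a_{m+p-1}',
\]
which splits the residue into two pieces. Since each $a_i$ is monic of degree $2^{i-1}$, a leading-term calculation gives $\deg W = 2^{m-2} + 2^{m+p-2} - 1$ with leading coefficient $2^{m-2} - 2^{m+p-2}$; consequently $\phi W/(a_{m+p-1}\,F)$ has degree exactly $-1$ at infinity with that same leading coefficient, contributing precisely $2^{m-2} - 2^{m+p-2}$ to $[c^{-1}]$.

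The main obstacle is then to show that the complementary contribution $[c^{-1}](\phi\,a_{m+p-1}'/a_{m+p-1}) = T(\phi, a_{m+p-1})$ vanishes. The roots of $a_{m+p-1}$ parameterize those $c$ for which $0$ has period $d\mid m+p-1$; whenever $d \leq p$, the window $\{m-1,\ldots,m+p-2\}$ of $p$ consecutive indices contains a multiple of $d$, forcing $\phi(c) = 0$. Nonzero contributions come only from roots with $d > p$, and I expect the required cancellation to follow from another application of Proposition~\ref{prop:contour} to the pair $(\phi, a_{m+p-1})$, exploiting the reduction $a_{m+p-2}^2 \equiv -c \pmod{a_{m+p-1}}$ (from $a_{m+p-1} = a_{m+p-2}^2 + c$) to identify the relevant constant term as zero.
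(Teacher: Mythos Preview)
Your reduction via Lemma~\ref{lemma:Mobius sum for trace} to the identity $T(\phi,F)=2^{m-2}-2^{m+p-2}$ is exactly how the paper begins. From there the routes diverge. The paper applies Proposition~\ref{prop:contour} directly to the pair $(\phi,F)$, splitting $H=\phi F'=A+B$ along $F'=a'_{m+p-1}+a'_{m-1}$; it uses $a_{m+p-2}^2\equiv -a_{m-1}-c\pmod{F/c}$ to reduce $A$ (getting $\overline{A}(0)=2^{m+p-1}$) and a single degree comparison to reduce $B$ (getting $\overline{B}(0)=-2^{m-1}$), so the answer drops out as $-(\overline{A}(0)+\overline{B}(0))/2$. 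Your Wronskian decomposition $\phi F'/F=\phi\, a'_{m+p-1}/a_{m+p-1}+\phi W/(a_{m+p-1}F)$ is a genuine alternative: the degree-$(-1)$ piece yields $2^{m-2}-2^{m+p-2}$ immediately from leading coefficients, which is more transparent than in the paper, at the price of having to show $T(\phi,a_{m+p-1})=0$. You left that as an expectation, but your suggested method does finish it: writing $a'_{m+p-1}=2a_{m+p-2}a'_{m+p-2}+1$, the term $\phi\cdot 1$ has degree $2^{m+p-2}-2^{m-2}<\deg(a_{m+p-1}/c)$ (here $m\geq 3$ is used) and vanishes at $0$; and after $a_{m+p-2}^2\equiv -c\pmod{a_{m+p-1}}$ the other term becomes $-2c\,a'_{m+p-2}\prod_{i=m-1}^{m+p-3}a_i$, again of degree $2^{m+p-2}-2^{m-2}$ and vanishing at $0$, so Proposition~\ref{prop:contour} gives $T(\phi,a_{m+p-1})=0$. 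One caveat: the aside about ``passing to a squarefree factor'' if $F$ had multiple roots would not rescue the argument, since $T(\cdot,\cdot)$ counts roots with multiplicity; fortunately $a_{m+p-1}+a_{m-1}$ is known to be separable, and the paper likewise invokes Proposition~\ref{prop:contour} without further comment.
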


\begin{proof}
By Lemma~\ref{lemma:Mobius sum for trace}, we have
\begin{align}
\label{eq:tracePmp}
\tr(P_{m,p}) &= 2^p\sum_{d|p} (-1)^{p/d}\mu\bigg(\frac{p}{d}\bigg)
T\Bigg(\bigg(\prod_{i=m-1}^{m+d-2}a_i\bigg)^{p/d}, a_{m+d-1}+a_{m-1}\Bigg)
\notag \\
&= 2^p\Bigg( T\big(a_{m-1}^p, a_{m}+a_{m-1}\big)
- T\bigg(\prod_{i=m-1}^{m+p-2}a_i, a_{m+p-1}+a_{m-1}\bigg)\Bigg).
\end{align}
In the rest of the proof, for any polynomial $g\in \ZZ[c]$, we denote by $\overline{g}\in \ZZ[c]$
the remainder of $g$ when divided  by $(a_{m+p-1}+a_{m-1})/c$.
In particular, if $\deg(g)< \deg( (a_{m+p-1}+a_{m-1})/c )$, then $g=\overline{g}$.

By Proposition~\ref{prop:contour}, we have
\[T\bigg(\prod_{i=m-1}^{m+p-2} a_i, a_{m+p-1}+a_{m-1}\bigg) = -\frac{\overline{H}(0)}{2},\]
where $H=(\prod_{i=m-1}^{m+p-2} a_i)(a_{m+p-1}'+a_{m-1}')$.
Write $H=A+B$, where
\[A = a_{m+p-1}' \prod_{i=m-1}^{m+p-2} a_i
\quad\text{and}\quad
B = a_{m-1}' \prod_{i=m-1}^{m+p-2} a_i.\]
To compute $\overline{H}$,
we will now separately calculate the remainders $\overline{A}$ and $\overline{B}$
of $A$ and $B$ when divided by $(a_{m+p-1}+a_{m-1})/c$.

\smallskip

\textbf{Remainder of $\boldsymbol{A}$.}
Since $a_{m+p-1} = a_{m+p-2}^2+c$, we have $a_{m+p-1}' = 2a_{m+p-2}a_{m+p-2}'+1$,
and hence 
\[ A = A_1 + A_2,
\quad\text{where}\quad
A_1 = 2a_{m+p-2}a_{m+p-2}' \prod_{i=m-1}^{m+p-2} a_i
\quad\text{and}\quad
A_2 = \prod_{i=m-1}^{m+p-2} a_i . \]
We first consider $A_2$. Note that
\begin{equation}
\label{eq:degA2}
\deg(A_2) = \sum_{i=m-1}^{m+p-2}2^{i-1} = 2^{m+p-2}-2^{m-2}.
\end{equation}
Since $m\geq 3$, we have $\deg(A_2)<2^{m+p-2}-1 = \deg((a_{m+p-1}+a_{m-1})/c)$, and hence
$\overline{A_2}=A_2$, which vanishes at $0$.

We now consider $A_1$. Noting that
\[ a_{m+p-2}^2 = a_{m+p-1} -c \equiv -a_{m-1} - c \pmod{(a_{m+p-1}+a_{m-1})/c}, \]
we obtain
\[ A_1 \equiv 2a_{m+p-2}'(-a_{m-1}-c)\prod_{i=m-1}^{m+p-3}a_i = -C_1 - C_2 \pmod{(a_{m+p-1}+a_{m-1})/c}, \]
where 
\[C_1 = 2a_{m+p-2}'a_{m-1}\prod_{i=m-1}^{m+p-3}a_i
\quad\text{and}\quad
C_2 = 2ca_{m+p-2}'\prod_{i=m-1}^{m+p-3}a_i .\]
By direct computation, we obtain
\[\deg(C_1) = 2^{m+p-3}-1+ 2^{m-2} +\sum_{i=m-1}^{m+p-3}2^{i-1} = 2^{m+p-2}-1
= \deg((a_{m+p-1}+a_{m-1})/c).\]
Thus, by comparing the leading coefficients, we have
\[\overline{C}_1 = C_1 - 2^{m+p-2}\bigg(\frac{a_{m+p-1}+a_{m-1}}{c}\bigg),\]
which evaluates to $-2^{m+p-1}$ at $0$.

We also have
\begin{equation}
\label{eq:degC2}
\deg(C_2) = 1+2^{m+p-3}-1+\sum_{i=m-1}^{m+p-3} 2^{i-1} = 2^{m+p-2}-2^{m-2}.
\end{equation}
As with $A_2$ above, we have
$\deg(C_2) < \deg((a_{m+p-1}+a_{m-1})/c)$, since $m\geq 3$.
Thus, $\overline{C}_2 = C_2$, which vanishes at $0$.

Combining the foregoing computations, we conclude that
\begin{equation}
\label{eq:Abar}
\overline{A}(0) = -\overline{C}_1(0) - \overline{C}_2(0) + \overline{A}_2(0) = 2^{m+p-1}.
\end{equation}

\smallskip

\textbf{Remainder of $\boldsymbol{B}$.}
We have
\[\deg(B) = 2^{m-2}-1 + \sum_{i=m-1}^{m+p-2}2^{i-1} = 2^{m+p-2}-1 = \deg( (a_{m+p-1}+a_{m-1})/c).\]
Therefore, by comparing the leading coefficients, we obtain
\[\overline{B} = B - 2^{m-2}\bigg(\frac{a_{m+p-1}+a_{m-1}}{c}\bigg),\]
which evaluates to $-2^{m-1}$ at $0$. Thus, we conclude that
\begin{equation}
	\label{eq:Bbar}
	\overline{B}(0)=-2^{m-1}.
\end{equation}

Applying Proposition~\ref{prop:contour} to $H=A+B$,
together with equations~\eqref{eq:Abar} and~\eqref{eq:Bbar}, yields
\[T\bigg(\prod_{i=m-1}^{m+p-2}a_i, a_{m+p-1+a_{m-1}}\bigg)
= - \frac{H(0)}{2} 
= -\frac{1}{2}\Big( \overline{A}(0) + \overline{B}(0) \Big) =
2^{m-2}-2^{m+p-2}. \]
Plugging this value into equation~\eqref{eq:tracePmp} completes the proof.
\end{proof}

\begin{remark}
\label{remark:m2case}
We assumed $m\geq 3$ in Theorem~\ref{thm:Trace(P_{m,p})},
but the computations are only modestly more involved when $m=2$.
In particular, in that case, equations~\eqref{eq:degA2} and~\eqref{eq:degC2} yield
\[ \deg(A_2) =\deg(C_2) = \deg((a_{p+1}+a_{1})/c) .\]
Therefore, comparing the leading coefficients, we obtain
\[\overline{A}_2 = A_2 - \frac{a_{p+1}+a_{1}}{c} \quad\text{and}\quad
\overline{C}_2 = C_2 - 2^p \bigg( \frac{a_{p+1}+a_{1}}{c} \bigg) .\]
Because $(a_{p+1}+a_{1})/c$ evaluates to $2$ at $c=0$, it follows that
\[ \overline{A}_2(0)= -2 \quad\text{and}\quad \overline{C}_2(0) = -2^{p+1}. \]
We still have $\overline{C}_1(0)=-2^{p+1}$ and $\overline{B}(0)=-2$
from the proof of Theorem~\ref{thm:Trace(P_{m,p})},
and therefore
\[T\bigg(\prod_{i=1}^{p}a_i, a_{p+1}+a_{1} \bigg)
= -\frac{1}{2}\big(2^{p+1} + 2^{p+1} - 2 -2 \big) = 2-2^{p+1}.\]
We also have $a_2+a_1=c^2+2c$, and hence the roots of $a_2+a_1$
are simply $c_1=0$ and $c_2=-2$. Thus,
\[T(a_1^p, a_2+a_1) = T(c^p, a_2+a_1) = 0^2 + (-2)^p = -2^p, \]
by direct computation. Equation~\eqref{eq:tracePmp} then gives us
\[ \tr(P_{2,p}) = 2^p \big( -2^p - (2-2^{p+1})\big) = 2^p(2^p-2) = 2^{2p} - 2^{p+1} . \]
\end{remark}

\begin{remark}
\label{remark:trace in known extension}
In the proof of Theorem~\ref{thm:Trace(P_{m,p})} and in Remark~\ref{remark:m2case},
we never used the fact that $p$ is prime when calculating the expression
\[T\bigg(\prod_{i=m-1}^{m+p-2}a_i, a_{m+p-1}+a_{m-1} \bigg).\]
In particular, then, we have the formula
\[T\bigg(\prod_{i=m-1}^{m+n-2}a_i, a_{m+n-1}+a_{m-1} \bigg) =
\begin{cases} 
	2-2^{n+1} & \text{if } m=2 \\
	2^{m-2}-2^{m+n-2} & \text{if }m\geq 3.
\end{cases} \]
for any integers $m\geq 2$ and $n\geq 1$.
\end{remark}

\section{A trace relation for multiplier polynomials}
\label{sec:tracerelate}

The following result relates the coefficients of the multiplier polynomials $P_{m,n}$
under certain conditions on $m$ and $n$. Combined with \cite[Lemma 6.3]{BG2},
it will allow us to establish the $2$-specialness of the polynomials $P_{m,p}$
when $p$ is a prime less than $1024$.

\begin{thm}
\label{thm:P_{m,n}^2}
Let $m\geq 2$ and $n\geq 1$.
Write
\[ \big( P_{m,n}(x) \big)^2=x^k+\sum_{i=0}^{k-1} b_i x^i
\quad\text{and}\quad P_{m+1,n}(x) = x^{\ell}+\sum_{i=0}^{\ell-1} c_i x^i , \]
where $k=2\deg(P_{m,n})=2\deg(G_{m,n})$ and $\ell=\deg(P_{m+1,n})=\deg(G_{m+1,n})$.
For each integer $i\geq 1$ such that $\lfloor \frac{ni}{2}\rfloor \leq 2^{m-2}-2$, we have $b_{k-i}=c_{\ell-i}$.
\end{thm}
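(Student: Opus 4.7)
The plan is to recast the statement as an identity of power sums of roots and then to prove it via Möbius inversion and Proposition~\ref{prop:contour}. Set
\[
\Lambda(c):=2^n\prod_{j=0}^{n-1}a_{m+j}(c).
\]
Equation~\eqref{eq:lambda} shows that $\lambda_{m,n}(\beta)=\Lambda(\beta)$ for every root $\beta$ of $G_{m,n}$; while for a root $\alpha$ of $G_{m+1,n}$, the type-$(m+1,n)$ relations $a_{m+1}(\alpha)=a_{m+n+1}(\alpha)$ and $a_m(\alpha)\neq a_{m+n}(\alpha)$ force $a_{m+n}(\alpha)=-a_m(\alpha)$, so $\lambda_{m+1,n}(\alpha)=2^n a_{m+1}(\alpha)\cdots a_{m+n}(\alpha)=-\Lambda(\alpha)$. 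Hence the $i$-th power sum of $(P_{m,n})^2$ equals $2T(\Lambda^i,G_{m,n})$, while the $i$-th power sum of $P_{m+1,n}$ equals $(-1)^i T(\Lambda^i,G_{m+1,n})$. By Newton's identities, the asserted coefficient equalities $b_{k-i}=c_{\ell-i}$ for $1\le i\le I$ (with $I$ the largest integer satisfying the hypothesis) are equivalent to the single power-sum identity
\[
2T(\Lambda^i,G_{m,n})=(-1)^i T(\Lambda^i,G_{m+1,n})\qquad(1\le i\le I).
\]

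Next, using Lemma~\ref{lemma:properties of T} and the factorization \eqref{eq:Gdef}, I would expand each trace as an alternating sum indexed by $d\mid n$: the $G_{m,n}$ side becomes $\sum_{d\mid n}\mu(n/d)T(\Lambda^i,F_{m,d})$ (with an analogous term in $a_d$ when $n\mid m-1$), where $F_{m,d}:=a_{m+d-1}+a_{m-1}$, and similarly for $G_{m+1,n}$. A crucial simplification is that on each intermediate variety $\Lambda$ can be replaced by a much lower-degree polynomial: at a root $\beta$ of $F_{m,d}$, periodicity of the orbit past position $m$ gives $\Lambda(\beta)=\Lambda_d(\beta)^{n/d}$, where $\Lambda_d(c):=2^d\prod_{j=0}^{d-1}a_{m+j}(c)$; at a root $\alpha$ of $F_{m+1,d}$, periodicity past position $m+1$ combined with $a_{m+d}(\alpha)=-a_m(\alpha)$ gives $\Lambda(\alpha)=(-1)^{n/d-1}\Lambda_d(\alpha)^{n/d}$. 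It therefore suffices to establish, for each $d\mid n$, the local identity
\[
2T(\Lambda_d^{in/d},F_{m,d})=(-1)^{in/d}T(\Lambda_d^{in/d},F_{m+1,d})
\]
(together with the analogous identities for the auxiliary $a_d$ corrections).

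Each of these traces can be evaluated via Proposition~\ref{prop:contour}, since $F_{m,d}(0)=F_{m+1,d}(0)=0$ with derivative $2$ at $c=0$ (using $a_j'(0)=1$ for $j\ge 1$). The polynomial identity
\[
F_{m+1,d}(c)=F_{m,d}(c)^2-2\big(a_{m+d-1}(c)a_{m-1}(c)-c\big),
\]
together with its derivative, then lets us express the $F_{m+1,d}$-remainder as the $F_{m,d}^2$-remainder plus an error coming from the correction term $-2(a_{m+d-1}a_{m-1}-c)$, much as the proof of Theorem~\ref{thm:Trace(P_{m,p})} handled the special case $i=1$, $d=n=p$. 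The bound $\lfloor ni/2\rfloor\le 2^{m-2}-2$ is calibrated precisely so that $\deg\big(\Lambda_d^{in/d}\cdot a_{m+d-1}a_{m-1}\big)$ stays below the threshold $\deg(F_{m+1,d}/c)=2^{m+d-1}-1$, guaranteeing that the correction term contributes nothing to the constant coefficient of the remainder.

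The main obstacle will be the detailed degree bookkeeping at this last step: handling the correction $-2(a_{m+d-1}a_{m-1}-c)$ in the reduction modulo $F_{m+1,d}/c$, verifying that no stray contribution survives in the constant coefficient under the stated bound, and checking that the Möbius-weighted sum over $d\mid n$ (together with the auxiliary terms when $n\mid m-1$ or $n\mid m$) really does assemble into the desired identity. The appearance of $2^{m-2}$ in the bound directly mirrors $\deg(a_{m-1})=2^{m-2}$, reflecting that the quadratic correction $a_{m+d-1}a_{m-1}-c$ is precisely what limits the range of $i$ for which the two polynomials' top coefficients agree.
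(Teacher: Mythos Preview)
Your overall framework is right and matches the paper's: reduce to power sums via Newton's identities, expand by M\"obius over $d\mid n$, and exploit the identity $F_{m+1,d}=F_{m,d}^2-2(a_{m+d-1}a_{m-1}-c)$ so that the top coefficients of $F_{m,d}^2$ and $F_{m+1,d}$ agree. But the step you flag as ``detailed degree bookkeeping'' is where the real content lies, and the specific claim you make there is false. You assert that $\deg\big(\Lambda_d^{in/d}\cdot a_{m+d-1}a_{m-1}\big)$ stays below $\deg(F_{m+1,d}/c)=2^{m+d-1}-1$; but already for $i=1$, $d=n$ one has
\[
\deg\Lambda_d + \deg(a_{m+d-1}a_{m-1})
=(2^{m+d-1}-2^{m-1})+(2^{m+d-2}+2^{m-2})
=3\cdot 2^{m+d-2}-2^{m-2},
\]
which exceeds $2^{m+d-1}-1$ for every $d\ge 1$. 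So the correction term cannot be disposed of by a naked degree count, and the analogy with the $i=1$ computation in Theorem~\ref{thm:Trace(P_{m,p})} does not scale.

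What the paper does instead is develop a reduction mechanism (Lemmas~\ref{lemma:multiply by c^k}, \ref{lemma:multiply by c^k, whole product}, and~\ref{lemma:A_{m,n,k}+B_{m,n,k}}) that rewrites each high-degree trace $T\big(\prod_j a_{m+j-1}^{k_j},\,F_{m,d}\big)$ as an integer linear combination of the low-degree traces $T(c^j,F_{m,d})$ for $0\le j\le K$, using only the relations $a_s^2=a_{s+1}-c$ and $a_{m+d-1}\equiv -a_{m-1}$. The crucial point---and the idea your proposal is missing---is that the integer coefficients produced by this reduction are \emph{independent of $m$}. Hence the same linear combination, with the \emph{same} coefficients, expresses the $(m+1)$-side trace in terms of $T(c^j,F_{m+1,d})$. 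Only after this reduction does the top-coefficient agreement between $F_{m,d}^2$ and $F_{m+1,d}$ apply, since now one merely needs $2T(c^j,F_{m,d})=T(c^j,F_{m+1,d})$ for $j\le 2^{m-2}-2$, which follows from Newton's identities on the roots. Without this $m$-independent reduction, the comparison you propose cannot be carried out.
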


\begin{cor}
\label{cor:trace-doubles}
Let $m\geq 2$ and $n\geq 1$. Suppose that $\lfloor \frac{n}{2}\rfloor\leq2^{m-2}-2$.
Then $\tr(P_{m+1,n}) = 2\tr(P_{m,n})$.
\end{cor}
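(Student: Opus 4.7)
The plan is to obtain the corollary as a one-line consequence of Theorem~\ref{thm:P_{m,n}^2} applied with $i=1$, after unpacking what the $x^{k-1}$-coefficient of the square of a monic polynomial says about its trace.

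First I would observe that the hypothesis $\lfloor n/2\rfloor \leq 2^{m-2}-2$ is precisely the condition in Theorem~\ref{thm:P_{m,n}^2} for $i=1$, so the theorem yields $b_{k-1}=c_{\ell-1}$, with $b_{k-1}$ the coefficient of $x^{k-1}$ in $(P_{m,n})^2$ and $c_{\ell-1}$ the coefficient of $x^{\ell-1}$ in $P_{m+1,n}$.

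Next I would compute $b_{k-1}$ directly from squaring a monic polynomial. Writing $P_{m,n}(x) = x^{k/2} + d_{k/2-1} x^{k/2-1}+\cdots$, the only way to produce an $x^{k-1}$ term in $(P_{m,n}(x))^2$ is the cross-product of the leading term with the subleading term, giving $b_{k-1}=2d_{k/2-1}$. Since $\tr(P_{m,n}) = -d_{k/2-1}$ and $\tr(P_{m+1,n}) = -c_{\ell-1}$, the equality $b_{k-1}=c_{\ell-1}$ translates immediately into $\tr(P_{m+1,n}) = 2\tr(P_{m,n})$.

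Since all the real work is packaged in Theorem~\ref{thm:P_{m,n}^2}, there is no substantive obstacle here; the only thing to double-check is that the hypothesis $\lfloor n/2 \rfloor \leq 2^{m-2}-2$ really does allow $i=1$ (it does, trivially, since the theorem's hypothesis is $\lfloor ni/2\rfloor \leq 2^{m-2}-2$ for the chosen $i$).
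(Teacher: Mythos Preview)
Your proposal is correct and takes essentially the same approach as the paper: apply Theorem~\ref{thm:P_{m,n}^2} with $i=1$ and use that $\tr(P_{m,n}^2)=2\tr(P_{m,n})$. You simply spell out this trace identity by an explicit coefficient computation, whereas the paper states it in one phrase.
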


\begin{proof}
Since we have $\tr(P_{m,n}^2)=2\tr(P_{m,n})$,
Corollary~\eqref{cor:trace-doubles} immediately follows from Theorem~\ref{thm:P_{m,n}^2} with $i=1$.
\end{proof}

The rest of this section is devoted to the proof of Theorem~\ref{thm:P_{m,n}^2}.
We need several lemmas, some of which we discovered through empirical observations in Magma.

\begin{notation}
In this section, whenever we have integers $m\geq 2$ and $n\geq 1$, we fix the following notation.
For any polynomial $f\in \ZZ[c]$, we denote by $\overline{f}\in \ZZ[c]$
the remainder of $f$ when divided  by $(a_{m+n-1}+a_{m-1})/c$.
\end{notation}

\begin{lemma}
\label{lemma:simpleprod}
Fix positive integers $\ell, s\geq 1$ with $\ell \leq s+1$. Then
\[ a_{\ell} \prod_{j=\ell}^{s} a_j = a_{s+1} - c \sum_{i=\ell+1}^{s+1} \prod_{j=i}^{s} a_j , \]
where we understand an empty sum to be $0$, and an empty product to be $1$.
\end{lemma}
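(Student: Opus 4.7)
The plan is to prove the identity by downward induction on $\ell$, with $s$ fixed. The recursion $a_{j+1}=a_j^2+c$, rewritten as $a_j^2=a_{j+1}-c$, is exactly what lets us collapse the repeated factor $a_\ell^2$ on the left-hand side into something involving $a_{\ell+1}$, producing both the shifted product (to which the inductive hypothesis applies) and the extra $-c\prod_{j=\ell+1}^{s}a_j$ term that accounts for the new summand in the sum on the right.

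First, I would dispose of the base case $\ell=s+1$. There both the product $\prod_{j=s+1}^{s}a_j$ and the sum $\sum_{i=s+2}^{s+1}$ are empty, so the identity reads $a_{s+1}=a_{s+1}$. (One may equivalently take $\ell=s$ as the base: then $a_s\prod_{j=s}^{s}a_j=a_s^2=a_{s+1}-c$, matching the right-hand side.)

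For the inductive step, assume the identity for $\ell+1$ (with $\ell+1\leq s+1$) and prove it for $\ell$. Split off one copy of $a_\ell$ and use $a_\ell^2=a_{\ell+1}-c$:
\[
a_\ell\prod_{j=\ell}^{s}a_j
= a_\ell^2\prod_{j=\ell+1}^{s}a_j
= a_{\ell+1}\prod_{j=\ell+1}^{s}a_j \;-\; c\prod_{j=\ell+1}^{s}a_j.
\]
By the inductive hypothesis applied to the first term,
\[
a_{\ell+1}\prod_{j=\ell+1}^{s}a_j = a_{s+1} - c\sum_{i=\ell+2}^{s+1}\prod_{j=i}^{s}a_j,
\]
and combining with the $-c\prod_{j=\ell+1}^{s}a_j$ simply extends the range of summation down to $i=\ell+1$, giving the desired formula.

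There is no substantive obstacle here; the lemma is a purely formal manipulation of the Mandelbrot recursion. The only point to be careful about is the empty-product/empty-sum convention at the top of the range, which is why I would begin with the $\ell=s+1$ base case rather than run the induction from $\ell=s$ and worry about boundary effects.
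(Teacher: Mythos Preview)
Your proof is correct and follows essentially the same approach as the paper's: downward induction on $\ell$ with $s$ fixed, base case $\ell=s+1$, and the inductive step via $a_\ell^2=a_{\ell+1}-c$. The arguments are virtually identical.
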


\begin{proof}
Fix $s$, and proceed by decreasing induction on $\ell$.

For $\ell=s+1$, both sides are simply $a_{s+1}$, verifying the equality.
Assuming the claim for $\ell + 1$, we have
\begin{align*}
a_{\ell} \prod_{j=\ell}^{s} a_j
&= a_{\ell}^2 \prod_{j=\ell+1}^{s} a_j
= (a_{\ell+1}-c) \prod_{j=\ell+1}^{s} a_j
= a_{\ell+1} \bigg(\prod_{j=\ell+1}^{s} a_j\bigg) -c \bigg(\prod_{j=\ell+1}^{s} a_j\bigg) \\
& = a_{s+1} - c \bigg( \sum_{i=\ell+2}^{s+1} \prod_{j=i}^{s} a_j \bigg)
- c \bigg( \prod_{j=\ell+1}^{s} a_j \bigg)
= a_{s+1} - c \sum_{i=\ell+1}^{s+1} \prod_{j=i}^{s} a_j ,
\end{align*}
where the second equality is because $a_{\ell+1}=a_{\ell}^2+c$,
and the fourth is by our inductive assumption.
\end{proof}

\begin{lemma}
\label{lemma:prodzero}
Fix an integer $1\leq \ell \leq m+n-1$, and let $R\in\ZZ[c]$. Define
\[ H = R a_\ell \bigg( \prod_{j=\ell}^{m+n-2} a_j \bigg) \in \ZZ[c] .\]
 If $\deg R \leq \min\{ 2^{m+n-2} - 2^{m-2} -2, 2^{\ell}-3\}$, then $\overline{H}(0)=0$.
\end{lemma}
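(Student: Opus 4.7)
The plan is to reduce $H$ modulo $q := (a_{m+n-1} + a_{m-1})/c$ and show that after reduction every term visibly vanishes at $c = 0$.

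First I would apply Lemma~\ref{lemma:simpleprod} with $s = m+n-2$ to rewrite
\[ a_\ell \prod_{j=\ell}^{m+n-2} a_j = a_{m+n-1} - c \sum_{i=\ell+1}^{m+n-1} \prod_{j=i}^{m+n-2} a_j. \]
Multiplying through by $R$ and using $a_{m+n-1} \equiv -a_{m-1} \pmod{q}$, which holds by construction since $a_{m+n-1} + a_{m-1} = cq$, gives
\[ H \equiv -R\,a_{m-1} - Rc \sum_{i=\ell+1}^{m+n-1} \prod_{j=i}^{m+n-2} a_j \pmod{q}. \]

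The crux of the argument is then to verify that the right-hand side already has degree strictly less than $\deg q = 2^{m+n-2} - 1$, so it is literally equal to $\overline{H}$ and no further reduction is needed. Using $\deg a_k = 2^{k-1}$ for $k \geq 1$, the first summand $R\,a_{m-1}$ has degree at most $\deg R + 2^{m-2}$, which is at most $2^{m+n-2} - 2$ precisely by the hypothesis $\deg R \leq 2^{m+n-2} - 2^{m-2} - 2$. For the $i$-th term of the sum, the degree equals $\deg R + 1 + (2^{m+n-2} - 2^{i-1})$, and this is at most $2^{m+n-2} - 2$ if and only if $\deg R \leq 2^{i-1} - 3$; the tightest case is $i = \ell + 1$, which is exactly the other half of the hypothesis, $\deg R \leq 2^\ell - 3$.

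Once $\overline{H}$ is identified with this explicit expression, evaluating at $c = 0$ finishes the proof: the first summand vanishes because $a_{m-1}(0) = 0$ (valid because $m \geq 2$), and the second vanishes because of the overall factor of $c$. The main obstacle is purely the degree bookkeeping — namely, matching the two hypothesis bounds with the two families of terms that must be kept strictly below the modulus, and recognizing that the innermost term $i = \ell + 1$ of the sum is exactly what forces the cutoff $2^\ell - 3$.
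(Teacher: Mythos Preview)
Your proof is correct and follows essentially the same route as the paper's: apply Lemma~\ref{lemma:simpleprod}, use the congruence $a_{m+n-1}\equiv -a_{m-1}\pmod{q}$, check that the resulting expression $-R(a_{m-1}+cQ)$ already has degree strictly below $\deg q$ via the two hypothesis bounds, and evaluate at $c=0$. The only cosmetic difference is that the paper bounds $\deg(cQ)$ once using the dominant $i=\ell+1$ term, whereas you bound each term of the sum separately; these are equivalent.
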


\begin{proof}
Since $a_{m+n-1}\equiv - a_{m-1}$ modulo $(a_{m+n-1}+a_{m-1})/c$, 
it follows from Lemma~\ref{lemma:simpleprod} that $H$ is congruent to
\begin{equation}
\label{eq:Hwrite}
-R\big( a_{m-1} +  c Q \big) ,
\quad\text{where}\quad
Q=\sum_{i=\ell+1}^{m+n-1} \prod_{j=i}^{m+n-2} a_j.
\end{equation}
If $\ell=m+n-1$, then $Q=0$; otherwise,
the highest degree term in the sum defining $Q$ is the $i=\ell+1$ term, and hence
\[ \deg (cQ) = 1 + \sum_{j=\ell+1}^{m+n-2} 2^{j-1} = 1 + 2^{m+n-2} - 2^{\ell} . \]
Thus,
\begin{align*}
\deg\big( -R( a_{m-1} +  c Q ) \big)
& \leq \deg(R) + \max\big\{ 2^{m-2}, 1 + 2^{m+n-2} - 2^{\ell} \big\} \\
& \leq 2^{m+n-2}-2 < \deg\big( (a_{m+n-1} + a_{m-1}) / c\big) ,
\end{align*}
and hence $\overline{H} = -R(a_{m-1}+cQ)$. Since both $a_{m-1}$ and $c$ are zero at $c=0$,
it follows that $\overline{H}(0)=0$.
\end{proof}

\begin{lemma}
\label{lemma:multiply by c^k}
Let $\{i_1,i_2,\dots,i_t\}$ be a nonempty, proper subset of $\{m-1,m,\dots,m+n-2\}$ with $i_1<i_2<\dots<i_t$.
If $0\leq k\leq 2^{m-2}-2$, then
\[ T\bigg(c^k\prod_{j=1}^{t} a_{i_j}, a_{m+n-1}+a_{m-1}\bigg)= 0. \]
\end{lemma}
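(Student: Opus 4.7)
The plan is to apply Proposition~\ref{prop:contour}, which reduces the claim to showing $\overline{H}(0)=0$, where $H := c^k \prod_{j=1}^{t} a_{i_j} \cdot g'$ with $g := a_{m+n-1}+a_{m-1}$ and $\overline{H}$ denotes the remainder of $H$ modulo $g/c$. I would first expand $g' = a_{m+n-1}' + a_{m-1}'$ using the unrolled derivative formula
\[ a_s' = \sum_{\iota=1}^{s} 2^{s-\iota} \prod_{r=\iota}^{s-1} a_r, \]
which follows by induction from $a_s' = 2a_{s-1}a_{s-1}' + 1$. This writes $H$ as a sum of elementary terms
\[ T = c^k \cdot 2^e \cdot \prod_{i\in S} a_i \cdot \prod_{r=\iota}^{s} a_r, \qquad S := \{i_1,\dots,i_t\},\ s\in\{m-2,\, m+n-2\}, \]
and since $\overline{T_1+T_2} = \overline{T_1}+\overline{T_2}$, it suffices to show $\overline{T}(0)=0$ for each such $T$.

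For the terms with $s=m-2$ (from $a_{m-1}'$), and also for those with $s=m+n-2$ in which either $\iota=m+n-1$ or $S\cap\{\iota,\dots,m+n-2\}=\emptyset$, I would give a direct degree argument. Using $k\leq 2^{m-2}-2$ together with the fact that $S$ is proper (so $\sum_{i\in S} 2^{i-1}\leq 2^{m+n-2}-2^{m-1}$), one checks $\deg T < \deg(g/c) = 2^{m+n-2}-1$. Hence $\overline{T}=T$, and $T(0)=0$ because $S$ is nonempty and $a_{i_j}(0)=0$ for each $j$.

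The remaining terms have $s=m+n-2$, $1\leq\iota\leq m+n-2$, and $S\cap\{\iota,\dots,m+n-2\}\neq\emptyset$. For these I would set $\ell := \max S$, so $\iota\leq\ell\leq m+n-2$ and $a_\ell$ appears twice in the combined product (once from $\prod_{i\in S} a_i$ and once from $\prod_{r=\iota}^{m+n-2} a_r$). Factoring out this duplicate together with the contiguous tail $\prod_{r=\ell+1}^{m+n-2} a_r$ lets me write $T = \tilde{R}\cdot a_\ell \prod_{r=\ell}^{m+n-2} a_r$ with
\[ \tilde{R} = c^k \cdot 2^{m+n-1-\iota} \cdot \prod_{r=\iota}^{\ell-1} a_r \cdot \prod_{r\in S,\, r<\ell} a_r, \]
so that Lemma~\ref{lemma:prodzero} will yield $\overline{T}(0)=0$ once the degree bound $\deg\tilde{R}\leq\min\{2^{m+n-2}-2^{m-2}-2,\, 2^\ell-3\}$ is verified.

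The main obstacle is precisely this degree-bookkeeping step in the subcase $\ell=m+n-2$: the naive estimate $\sum_{r\in S,\,r<\ell} 2^{r-1}\leq 2^{\ell-1}-2^{m-2}$ pushes $\deg\tilde{R}$ past the first threshold of Lemma~\ref{lemma:prodzero} as soon as $m\geq 3$. To close the gap, I would invoke the properness of $S$ a second time: since $m+n-2\in S$ and $S$ is proper, some other index in $\{m-1,\dots,m+n-3\}$ must also be missing from $S$, yielding the refined estimate $\sum_{r\in S,\,r<\ell} 2^{r-1}\leq 2^{m+n-3}-2^{m-1}$, which together with $k\leq 2^{m-2}-2$ is just enough for both inequalities of Lemma~\ref{lemma:prodzero}. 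The subcase $\ell<m+n-2$ is significantly simpler, since then $2^\ell\leq 2^{m+n-3}$ and both bounds follow directly from $k\leq 2^{m-2}-2$.
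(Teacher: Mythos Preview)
Your proposal is correct and follows essentially the same route as the paper: reduce via Proposition~\ref{prop:contour}, unroll the derivative of $a_{m+n-1}$, dispose of the low-degree terms by a direct degree count, and for the remaining terms set $\ell=\max S$ and invoke Lemma~\ref{lemma:prodzero}, using the extra missing index forced by properness to close the bound in the subcase $\ell=m+n-2$. The only cosmetic difference is that the paper handles the $a_{m-1}'$ contribution as a single block rather than unrolling it term by term.
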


\begin{proof}
By Proposition~\ref{prop:contour}, we have 
\[ T\bigg(c^k\prod_{j=1}^{t} a_{i_j}, a_{m+n-1}+a_{m-1}\bigg)=-\frac{\overline{H}(0)}{2},\]
where $H = c^k (a_{m+n-1}'+a_{m-1}')\prod_{s=1}^{t} a_{i_s}$.
We will prove the lemma by showing that the polynomials
$\overline{H}_1$ and $\overline{H}_2$ both vanish at $c=0$, where
\[ H_1= c^k a_{m-1}' \prod_{s=1}^{t}a_{i_s}
\quad\text{and}\quad
H_2= c^k a_{m+n-1}' \prod_{s=1}^{t}a_{i_s} . \]

We have
\begin{align*}
	\deg(H_1)
	&= k+ (2^{m-2}-1) + \sum_{s=1}^{t}2^{i_s-1}
	\leq k - 1 +\sum_{j=m-1}^{m+n-2}2^{j-1} \\
	& \leq 2^{m+n-2}-3 <  2^{m+n-2}-1 = \deg\big( (a_{m+n-1}+a_{m-1})/c \big),
\end{align*}
where in the first inequality we have invoked the fact that $\{i_1,\ldots,i_t\}$ is a \emph{proper}
subset of $\{m-1,\ldots,m+n-2\}$ to absorb the $2^{m-2}$ into the sum,
and in the second we have used the fact that $k\leq 2^{m-2}-2$.
It follows that $\overline{H}_1=H_1$,
which vanishes at $c=0$ because $t\geq 1$.

We now consider $H_2$. Note that for any $r\geq 2$, we have
$a_{r}= c + a_{r-1}^2$, and hence
\begin{equation}
\label{eq:arprime}
a_{r}' = 1 + 2a_{r-1}a_{r-1}' .
\end{equation}
Applying this formula inductively, starting from $a'_1=1$, we obtain
\[ a_{r}' = \sum_{i=0}^{r-1} 2^i \bigg(\prod_{j=r-i}^{r-1}a_j\bigg), \]
where we understand the empty product in the $i=0$ term to be $1$. Therefore,
\begin{equation}
\label{eq:H2sum}
H_2 = c^k a_{m+n-1}' \prod_{s=1}^{t}a_{i_s}
= \sum_{i=0}^{m+n-2} \Bigg( 2^i c^k \bigg(\prod_{s=1}^{t}a_{i_s}\bigg)
\bigg( \prod_{j=m+n-i-1}^{m+n-2}a_j\bigg) \Bigg).
\end{equation}
%
For each $0\leq i\leq m+n-2$, the $i$-th term of the sum in equation~\eqref{eq:H2sum} is
\[ H_{2,i} = 2^i c^k \bigg(\prod_{s=1}^{t}a_{i_s}\bigg) \bigg( \prod_{j=m+n-i-1}^{m+n-2}a_j\bigg) .\]
It suffices to show that for each such $i$, we have $\overline{H}_{2,i}(0)=0$.
We consider two cases.

First, suppose that $i_t< m+n-i-1$. Then because $m-1\leq i_1 < \cdots < i_t < m+n-i-1$, we have
\begin{align*}
	\deg(H_{2,i})
	&= k + \sum_{s=1}^{t}2^{i_s-1} + \sum_{j=m+n-i-1}^{m+n-2} 2^{j-1}
	\leq  2^{m-2} - 2 + \sum_{j=m-1}^{m+n-2} 2^{j-1} \\
	&= 2^{m+n-2}-2
	< 2^{m+n-2}-1 = \deg\big( (a_{m+n-1}+a_{m-1})/c \big),
\end{align*}
	and as before it follows that $\overline{H}_{2,i} = H_{2,i}$, which vanishes at $0$.

Second, suppose that $i_t \geq m+n-i-1$. Let $\ell=i_t$, so that $m-1\leq \ell \leq m+n-2$.
Then we have
\[ H_{2,i} = R a_{\ell} \bigg(\prod_{j=\ell}^{m+n-2} a_j\bigg),
\quad\text{where}\quad
R =2^i  c^k\bigg(\prod_{s=1}^{t-1}a_{i_s}\bigg) \bigg(\prod_{j=m+n-i-1}^{\ell-1}a_j\bigg) . \]
We will show that
\begin{equation}
\label{eq:Rboundcase2}
\deg R \leq \min\{ 2^{m+n-2} - 2^{m-2} -2, 2^{\ell}-3\},
\end{equation}
from which it follows, by Lemma~\ref{lemma:prodzero}, that $\overline{H}_{2,i}(0)=0$.

Thus, it remains to prove bound~\eqref{eq:Rboundcase2}. Observe that
\begin{align}
\label{eq:Rcase2}
	\deg R & = k + \sum_{s=1}^{t-1}2^{i_s-1} + \sum_{j=m+n-i-1}^{\ell -1} 2^{j-1} \notag \\
	& \leq (2^{m-2} -2 ) + \bigg(\sum_{s=1}^{t-1}2^{i_s-1}\bigg) + \big( 2^{\ell-1} - 2^{m+n-i-2} \big) \\
	&\leq \big( 2^{\ell-1} - 2^{m+n-i-2} - 2\big) + 2^{m-2} + \sum_{j=m-1}^{\ell-1}2^{j-1} \notag \\
	& = \big( 2^{\ell-1} - 2^{m+n-i-2} - 2\big) + 2^{\ell-1}
	\leq 2^\ell -3, \notag
\end{align}
where the second inequality is because $i_1\geq m-1$, and the third is because $i\leq m+n-2$,
and hence $2^{m+n-i-2} \geq 1$.
If $\ell \leq m+n-3$, then because $n\geq 1$, we have
\[ 2^{\ell} -3 \leq 2^{m+n-3} -3 < 2^{m+n-2} - 2^{m-2} -2, \]
so that inequality~\eqref{eq:Rcase2} proves bound~\eqref{eq:Rboundcase2} in that case.

Otherwise, we have $\ell=m+n-2$. Recalling that 
$\{i_1,i_2,\dots,i_t\}$ is a proper subset of $\{m-1,m,\dots,m+n-2\}$, it follows that
\[ \sum_{s=1}^{t-1}2^{i_s-1} \leq \bigg( \sum_{j=m-1}^{\ell-1}2^{j-1} \bigg) - 2^{m-2}
= 2^{\ell-1} - 2^{m-1}. \]
Thus, inequality~\eqref{eq:Rcase2} gives us
\begin{align*}
\deg R & \leq (2^{m-2} -2 ) + \big( 2^{\ell-1} - 2^{m-1} \big) + \big( 2^{\ell-1} - 2^{m+n-i-2} \big) \\
& = 2^\ell - 2^{m-2} - 2^{m+n-i-2}-2 < 2^{m+n-2} - 2^{m-2} - 2,
\end{align*}
completing the proof of bound~\eqref{eq:Rboundcase2} and hence the lemma.
\end{proof}

\begin{lemma}
\label{lemma:multiply by c^k, whole product}
Let $1\leq k\leq 2^{m-2}-2$. Then
\[ T\bigg(c^k\prod_{j=m-1}^{m+n-2}a_j,a_{m+n-1}+a_{m-1}\bigg)
= \bigg( \frac{1}{2^n}-1 \bigg) T\big(c^k, a_{m+n-1}+a_{m-1}\big) .\]
\end{lemma}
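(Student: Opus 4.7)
Plan:

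By Proposition~\ref{prop:contour} applied to both moments, the desired identity is equivalent to
\[ \overline{c^k F g'}(0) = \left(\tfrac{1}{2^n}-1\right)\overline{c^k g'}(0), \]
where $F:=\prod_{j=m-1}^{m+n-2}a_j$, $g:=a_{m+n-1}+a_{m-1}$, and the overline denotes the remainder modulo $g/c$. Clearing denominators yields the equivalent form $\overline{c^k(2^nF+2^n-1)\,g'}(0)=0$. I would split $g'=a'_{m-1}+a'_{m+n-1}$ and handle each contribution.

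For the piece multiplying $a'_{m-1}$, I would exploit the identity $a_{m-1}a'_{m-1}=(a'_m-1)/2$, together with the recursive expansion $a'_m=\sum_{i=0}^{m-1}2^{i}\prod_{j=m-i}^{m-1}a_j$ (provable by induction from $a'_r=1+2a_{r-1}a'_{r-1}$), in order to rewrite the contribution as a linear combination of $c^k$ times products $\prod_{j\in S}a_j$ with $S\subset\{1,\dots,m+n-2\}$. Those $S$ that form proper subsets of $\{m-1,\dots,m+n-2\}$ contribute zero by Lemma~\ref{lemma:multiply by c^k}, and those extending below $m-1$ are controlled by Lemma~\ref{lemma:prodzero}, whose degree hypothesis $\deg R \leq \min\{2^{m+n-2}-2^{m-2}-2,\,2^\ell-3\}$ is exactly what the assumption $k\leq 2^{m-2}-2$ guarantees for the relevant $R$'s.

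For the piece multiplying $a'_{m+n-1}$, I would expand $a'_{m+n-1}=\sum_{i=0}^{m+n-2}2^{i}\prod_{l=m+n-1-i}^{m+n-2}a_l$. Each summand, multiplied by $c^kF$, produces a product whose index range overlaps with $\{m-1,\dots,m+n-2\}$. Using the reductions $a_l^2=a_{l+1}-c$ and $a_{m+n-1}\equiv -a_{m-1}\pmod{g/c}$ (which holds because $g\equiv 0\pmod{g/c}$, since $g=c\cdot(g/c)$), I would collapse the repeated factors that arise, in the spirit of the proof of Theorem~\ref{thm:Trace(P_{m,p})}. The resulting terms split into proper-subset products (annihilated by Lemma~\ref{lemma:multiply by c^k}) plus explicit residual contributions. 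The key combinatorial observation is that the weights $2^i$ from the expansion of $a'_{m+n-1}$ assemble into the geometric sum $1+2+\cdots+2^{n-1}=2^n-1$, which—combined with the factor $2^n$ extracted from $F$ via the identity $2^nF+1=(f_c^n)'(a_{m-1})$—produces exactly the coefficient $(1/2^n-1)$ on the right-hand side.

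The main obstacle I anticipate is the delicate bookkeeping of the residual terms after all the reductions modulo $g/c$. In particular, one must check that every expansion term either has low enough degree to be already reduced (and then vanishes at $c=0$ because of the $c^k$ factor with $k\geq 1$), or reduces to a proper-subset product, or else contributes precisely the correct combinatorial weight to build up $2^n-1$. Matching the hypothesis $k\leq 2^{m-2}-2$ to the degree bounds in Lemmas~\ref{lemma:prodzero} and~\ref{lemma:multiply by c^k} is what prevents any ``spurious'' terms from appearing, and verifying this alignment across every case is where the bulk of the technical work should lie.
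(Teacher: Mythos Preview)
Your setup matches the paper's—reduce via Proposition~\ref{prop:contour} and split $g'=a'_{m-1}+a'_{m+n-1}$—but your treatment of the $a'_{m-1}$ piece has a gap. You use $a_{m-1}a'_{m-1}=(a'_m-1)/2$ once and then fully expand $a'_m=\sum_{i}2^i\prod_{j=m-i}^{m-1}a_j$ downward, producing terms $c^k\prod_{j=m-i}^{m+n-2}a_j$ for $i\geq 2$. You claim Lemma~\ref{lemma:prodzero} controls these, but that lemma only handles expressions $R\,a_\ell\prod_{j=\ell}^{m+n-2}a_j$ with a \emph{repeated} factor $a_\ell$; your products have each $a_j$ once, so the lemma does not apply, and for $i\geq 2$ the degrees can exceed $\deg(g/c)$ (e.g.\ already for $i=2$ when $k$ is near $2^{m-2}-2$), so the ``already reduced and vanishes at $0$'' shortcut fails too. (Your appeal to Lemma~\ref{lemma:multiply by c^k} for proper-subset terms is also misplaced—once the derivative has been absorbed into the expansion there is no $g'$ left, and that lemma is a statement about $T$-values, which carry a hidden factor of $g'$—though those particular terms happen to be low-degree and vanish at $0$ anyway.)

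The paper sidesteps all of this by iterating $a_r a'_r=(a'_{r+1}-1)/2$ \emph{upward} through $F$ rather than expanding downward:
\[
c^k a'_{m-1}\,F \;=\; \frac{c^k}{2^n}\,a'_{m+n-1}\;-\;\sum_{i=1}^{n}\frac{c^k}{2^i}\prod_{j=m+i-1}^{m+n-2}a_j,
\]
and every summand on the right has degree $<\deg(g/c)$ and vanishes at $0$, giving $\overline{A}(0)=2^{-n}\,\overline{c^k a'_{m+n-1}}(0)$ in one stroke. For the $a'_{m+n-1}$ piece the paper likewise avoids a full expansion: one application of $a'_{m+n-1}a_{m+n-2}=2(a_{m+n-1}-c)a'_{m+n-2}+a_{m+n-2}$, the congruence $a_{m+n-1}\equiv -a_{m-1}$, and Lemma~\ref{lemma:simpleprod} yield $\overline{B}(0)=-\,\overline{c^k a'_{m+n-1}}(0)$, and the identity follows.
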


\begin{proof}
By Proposition~\ref{prop:contour}, we have 
\[T\bigg(c^k\prod_{j=m-1}^{m+n-2}a_j,a_{m+n-1}+a_{m-1}\bigg)=-\frac{\overline{H}(0)}{2}\]
and
\[T\big(c^k, a_{m+n-1}+a_{m-1}\big) = -\frac{\overline{G}(0)}{2},\]
where
\[ H = c^k(a_{m+n-1}'+a_{m-1}') \prod_{j=m-1}^{m+n-2}a_j
\quad\text{and}\quad
G= c^k\big(a_{m+n-1}'+a_{m-1}' \big) .\]

Write $H=A+B$ where
\[ A= c^k a_{m-1}' \prod_{j=m-1}^{m+n-2}a_j
\quad\text{ and }\quad
B = c^k a_{m+n-1}' \prod_{j=m-1}^{m+n-2}a_j .\]
By equation~\eqref{eq:arprime}, we have $a_{r-1} a'_{r-1} = \frac{1}{2} (a'_r - 1)$ for any $r\geq 2$.
Applying this formula inductively, we have
\begin{align*}
A &= c^k\big( a_{m-1}a'_{m-1}\big) \prod_{j=m}^{m+n-2}a_j
= \frac{c^k}{2} a'_m  \prod_{j=m}^{m+n-2}a_j - \frac{c^k}{2} \prod_{j=m}^{m+n-2}a_j \\
& = \cdots = 
\frac{c^k}{2^n} a'_{m+n-1} - \sum_{i=1}^n \frac{c^k}{2^i} \prod_{j=m+i-1}^{m+n-2}a_j
\end{align*}
For each $i=1,\ldots, n$, the $i$-th term in the sum above has degree
\begin{align*}
	\deg \bigg( c^k\prod_{j=m+i-1}^{m+n-2}a_j\bigg)
	& = k + \sum_{j=m+i-1}^{m+n-2}2^{j-1} \leq 2^{m-2} - 2 + 2^{m+n-2} - 2^{m+i-2} \\
	& < 2^{m+n-2}-1 = \deg\big( (a_{m+n-1}+a_{m-1})/c \big) ,
\end{align*}
and since $k\geq 1$, has value $0$ at $c=0$.
Thus, $\overline{A}(0)=2^{-n} \overline{A}_0(0)$, where $A_0=c^k a'_{m+n-1}$.

Next, we turn to $B$. Writing
\begin{align*}
a'_{m+n-1} a_{m+n-2} &= (2a_{m+n-2}a_{m+n-2}'+1) a_{m+n-2}
= 2a_{m+n-2}^2 a_{m+n-2}'+ a_{m+n-2} \\
& = 2 (a_{m+n-1} - c) a_{m+n-2}'+ a_{m+n-2},
\end{align*}
we have $B = B_1 - B_2 + B_3$, where
\[B_1=2 c^k a_{m+n-1} a'_{m+n-2} \prod_{j=m-1}^{m+n-3}a_j,
\quad\quad
B_2 = 2 c^{k+1} a_{m+n-2}' \prod_{j=m-1}^{m+n-3}a_j , \]
and
\[ B_3= c^k \prod_{j=m-1}^{m+n-2}a_j .\]
Since $1\leq k \leq 2^{m-2}-2$, it is straightforward to check that
\[ \deg(B_2), \deg(B_3)< 2^{m+n-2} - 1 = \deg\big( (a_{m+n-1}+a_{m-1})/c\big) ,\]
and that $B_2(0)=B_3(0)=0$. Therefore, because $a_{m+n-1}\equiv -a_{m-1}$
modulo $(a_{m+n-1}+a_{m-1})/c$, we have
$\overline{B}(0) = \overline{B}_1(0) = -\overline C_1(0)$, where
\[ C_1 = 2c^k a'_{m+n-2} a_{m-1} \prod_{j=m-1}^{m+n-3}a_j
= 2c^k a_{m+n-2}a'_{m+n-2} - 2c^{k+1} a'_{m+n-2} \sum_{i=m}^{m+n-2} \prod_{j=i}^{m+n-3} a_j ,\]
where the second equality is by Lemma~\ref{lemma:simpleprod}.

For each $i=m,\ldots, m+n-2$, the $i$-th term in the sum above has degree
\begin{align*}
	\deg \bigg( c^{k+1} a'_{m+n-2} \prod_{j=i}^{m+n-3} a_j \bigg)
	& = k + 2^{m+n-3} + \sum_{j=i}^{m+n-3}2^{j-1}
	= k+2^{m+n-2} - 2^{i-1} \\
	& \leq (2^{m-2} - 2) + 2^{m+n-2} - 2^{m-1} < 2^{m+n-2}-1 \\
	& = \deg\big( (a_{m+n-1}+a_{m-1})/c \big) ,
\end{align*}
and in addition, each such term has value $0$ at $c=0$. Therefore,
$\overline{C}_1(0) = \overline{C}_2(0)$, where
\[ C_2 = 2c^ka_{m+n-2}a_{m+n-2}' = c^k(a_{m+n-1}'-1) =c^ka_{m+n-1}'-c^k . \]
Note that $\deg(c^k)=k < \deg( (a_{m+n-1}+a_{m-1})/c )$
and that $c^k$ has value $0$ at $c=0$, because $k\geq 1$.
Therefore, we have $\overline{C}_2(0)=\overline{A}_0(0)$, where $A_0 = c^ka_{m+n-1}'$ as above.
Combining the above computations, we have
$\overline{H}(0) = \overline{H}_0(0)$, where
\[ H_0 = 2^{-n}A_0 - A_0
= \bigg( \frac{1}{2^n} - 1 \bigg) c^k a_{m+n-1}' .\]

On the other hand, we may write $G=G_1 + G_2$ where
\[ G_1 = c^k a_{m-1}' \quad\text{and}\quad G_2 = c^k a_{m+n-1}' = A_0. \]
We have
\[ \deg(G_1) = k+2^{m-2}-1 < 2^{m-2}-1+2^{m-2}-1 = 2^{m-1}-2 < \deg( (a_{m+n-1}+a_{m-1})/c ) \]
and $G_1(0)=0$. Therefore,
\[  \bigg( \frac{1}{2^n} - 1 \bigg) \overline{G}(0) =  \bigg( \frac{1}{2^n} - 1 \bigg) \overline{A}_0 (0)
= \overline{H}_0(0) = \overline{H}(0), \]
completing the proof of Lemma~\ref{lemma:multiply by c^k, whole product}.
\end{proof}

\begin{lemma}
\label{lemma:A_{m,n,k}+B_{m,n,k}}
Let $n\geq 1$, and for each $i=0,\ldots,n-1$, let $k_i\geq 0$.
Define
\[ K=\bigg\lfloor \frac{k_0}{2} \bigg\rfloor + \cdots + \bigg\lfloor \frac{k_{n-1}}{2} \bigg\rfloor .\]
Let $\ell\geq 0$.
Then there are integers $A_0,\ldots,A_{\ell+K}$ and $B_0,\ldots, B_{\ell+K}$ with the following property.
For any $m\geq 2$ such that $\ell + K\leq 2^{m-2}-2$, we have
\[T\bigg( c^{\ell} \prod_{i=0}^{n-1}a_{m+i-1}^{k_i}, a_{m+n-1}+a_{m-1}\bigg) = R+S, \]
where
\[ R=\sum_{j=0}^{\ell+K}A_j T\big(c^j, a_{m+n-1}+a_{m-1}\big) \]
and
\[ S=\sum_{j=0}^{\ell+K}B_jT\bigg(c^j\prod_{i=m-1}^{m+n-2}a_i, a_{m+n-1}+a_{m-1}\bigg). \]
\end{lemma}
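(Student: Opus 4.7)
The plan is to prove the lemma by induction on the total exponent $M := \sum_{i=0}^{n-1} k_i$, building the integers $A_j, B_j$ through a reduction algorithm whose rewrite rules depend only on $n$ and on the tuple $(\ell, k_0, \ldots, k_{n-1})$, and not on $m$. The whole point of the statement is that the coefficients can be chosen independently of $m$, and this works because the two identities available for shrinking the monomial, namely $a_{m+i-1}^2 = a_{m+i} - c$ for $i < n-1$ (as an identity in $\ZZ[c]$) and $a_{m+n-2}^2 \equiv -a_{m-1} - c \pmod{a_{m+n-1} + a_{m-1}}$, have formally the same shape for every $m$.

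For the base case $M = 0$ the monomial is $c^\ell$, and I would set $A_\ell = 1$ with all other coefficients zero. In the inductive step I split into subcases. If every $k_i \in \{0,1\}$, then $\prod_i a_{m+i-1}^{k_i} = \prod_{i \in S} a_{m+i-1}$ for some $S \subseteq \{0,\ldots,n-1\}$: when $S$ is the full set, set $B_\ell = 1$; when $S = \emptyset$, the base case applies; when $S$ is a nonempty proper subset, Lemma~\ref{lemma:multiply by c^k} (applied with power $\ell = \ell + K \leq 2^{m-2}-2$) forces the $T$-value to vanish, so all coefficients are zero. Otherwise some $k_i \geq 2$; choose such an $i$ and use the relevant identity to rewrite the monomial as
\[ c^{\ell} a_{m+i-1}^{k_i - 2} a_{m+i} F \;-\; c^{\ell+1} a_{m+i-1}^{k_i - 2} F \]
for $i < n-1$ (with $F = \prod_{j \neq i} a_{m+j-1}^{k_j}$), or as the analogous expression with $a_{m+i}$ replaced by $-a_{m-1}$ and an extra overall sign when $i = n-1$. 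Each piece has the same shape as the original monomial but with strictly smaller $M$, so the inductive hypothesis applies to each; linearity of $T$ then assembles them into an expansion of the desired form.

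The step requiring care is verifying that the new pair $(\ell', K')$ produced at each recursive call still satisfies $\ell' + K' \leq 2^{m-2} - 2$, and more precisely that $\ell' + K' \leq \ell + K$, so that the indices $j$ in the resulting sums stay in the range $0 \leq j \leq \ell + K$. A short calculation using $\lfloor (k+1)/2 \rfloor - \lfloor k/2 \rfloor \in \{0,1\}$ shows that the first piece of the rewrite has $\ell' = \ell$ and $K' - K \in \{-1, 0\}$, while the second has $\ell' = \ell + 1$ and $K' = K - 1$; in either case $\ell' + K' \leq \ell + K$, which both sustains the inductive hypothesis and keeps the index bound in force. Since every rewrite has coefficients in $\ZZ$ with no dependence on $m$, the resulting $A_j, B_j$ are as required. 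The main obstacle, beyond organizing the induction cleanly, is precisely this bookkeeping on $\ell' + K'$ across both the $i < n-1$ and $i = n-1$ branches.
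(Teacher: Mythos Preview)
Your proof is correct and follows essentially the same approach as the paper's: both argue by induction using the two rewrite rules $a_{m+i-1}^2 = a_{m+i}-c$ and $a_{m+n-2}^2 \equiv -a_{m-1}-c$, handle the terminal cases (all $k_i\in\{0,1\}$) directly via Lemma~\ref{lemma:multiply by c^k}, and track $\ell+K$ to ensure the hypothesis propagates. The only difference is cosmetic: you induct on the total exponent $M=\sum k_i$, which strictly drops at every step, whereas the paper inducts on the pair $(K,k)$ lexicographically; your choice is in fact slightly cleaner, since the paper's secondary key $k$ already suffices on its own.
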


\begin{proof}
Let $k=k_0+\cdots+k_{n-1}$; note that $2K\leq k \leq 2K+n$.
Order the set of pairs $(K,k)$ of nonnegative integers with $2K\leq k \leq 2K+n$ as follows:
$(K',k') < (K,k)$ either if $K'<K$, or if $K'=K$ and $k'<k$.
(Note that for any such pair $(K,k)$, there are only finitely many such smaller pairs $(K',k')$.)
We proceed by induction on $(K,k)$, subject to this ordering.

If $k_i=0$ for every $i$, then choose $A_\ell=1$ and all other $A_j,B_j$ to be $0$;
the desired equality is immediate.
Similarly, if $k_i=1$ for every $i$, then choose $B_\ell=1$ and all other $A_j,B_j$ to be $0$.
On the other hand, if $k_i=0$ for some $i$, and $k_i=1$ for other $i$,
then by Lemma~\ref{lemma:multiply by c^k}, the desired quantity is $0$;
setting $A_j=B_j=0$ for all $j$, then, we are done.
The foregoing cases cover all possibilities when $K=0$.
In addition, the values of $A_j$ and $B_j$ are independent of $m$,
subject to the condition $\ell+K \leq 2^{m-2} -2$.

Now consider $k_0,\ldots,k_{n-1}$ for which $K\geq 1$.
Suppose that we already know the result for all pairs $(K',k') < (K,k)$.
Since $K\geq 1$, there is some $0\leq j\leq n-1$ such that $k_j\geq 2$; consider the largest such $j$.
We have
\[ a_{m+j-1}^{k_j} \equiv \begin{cases}
\dsps a_{m+j-1}^{k_j-2} (a_{m+j} - c) & \text{ if } j\leq n-2, \\[1mm]
\dsps a_{m+j-1}^{k_j-2} (-a_{m-1} - c) & \text{ if } j=n-1
\end{cases}
\quad \pmod{ a_{m+n-1}+a_{m-1} } .\]
Thus, we have $c^{\ell} \prod_{i=0}^{n-1}a_{m+i-1}^{k_i} \equiv \pm f_1 - f_2 \pmod{ a_{m+n-1}+a_{m-1} }$, where
\[ f_1 = c^{\ell} \prod_{i=0}^{n-1}a_{m+i-1}^{k'_i} \quad\text{and}\quad
f_2 = c^{\ell+1} \prod_{i=0}^{n-1}a_{m+i-1}^{k''_i}, \]
where the $\pm$ sign is $+$ if $j\leq n-2$, or $-$ if $j=n-1$,
and where
\[ k'_i = \begin{cases}
k_j-2 & \text{ if } i=j \text{ and } n\geq 2, \\
k_i+1 & \text{ if } i\equiv j+1 \, (\text{mod } n) \text{ and } n\geq 2, \\
k_i-1 & \text{ if } i=0 \text{ and } n=1, \\
k_i & \text{ otherwise},
\end{cases}
\quad\text{and}\quad
k''_i = \begin{cases}
k_j-2 & \text{ if } i=j, \\
k_i & \text{ otherwise}.
\end{cases}
\]
Define $K'=\lfloor k'_0/2 \rfloor + \cdots + \lfloor k'_{n-1}/2 \rfloor$
and $K''=\lfloor k''_0/2\rfloor + \cdots + \lfloor k''_{n-1}/2 \rfloor$.
Similarly define $k'=k'_0 + \cdots + k'_{n-1}$ and $k''=k''_0 + \cdots + k''_{n-1}$.
We have $K'\leq K$, and $k' =k-1< k$, and $K''=K-1<K$.
Therefore, $(K',k')<(K,k)$ and $(K'',k'')<(K,k)$.
If we also define $\ell'=\ell$ and $\ell''=\ell+1$,
then $\ell' + K' \leq 2^{m-2} -2$ and $\ell'' + K'' \leq 2^{m-2}-2$.
Hence, by our inductive assumption, the 
desired statement holds for both $f_1$ and $f_2$.
In addition, the resulting integer coefficients $A_j$ and $B_j$,
as linear combinations of the coefficients for $f_1$ and $f_2$,
are independent of $m$, as desired.
\end{proof}

\begin{proof}[Proof of Theorem~\ref{thm:P_{m,n}^2}]
Recall from Newton's identities for symmetric polynomials that in characteristic zero,
the coefficients $c_i$ of a monic polynomial $f(x)=x^N + c_{N-1} x^{N-1} + \cdots + c_0$
can be written in terms of the power sums $p_j=\sum_{i=1}^N \alpha_i^j$
of the roots $\alpha_1,\ldots,\alpha_N$. Moreover, these identities are independent of the degree $N$,
with $c_{N-j}$ depending only on $p_1,\ldots,p_j$;
for example, $c_{N-1}=-p_1$, and $c_{N-2}=\frac{1}{2}(p_1^2 - p_2)$.
Thus, to show that the two polynomials $(P_{m,n})^2$ and $P_{m+1,n}$ have matching
coefficients $b_{k-i}=c_{\ell-i}$ for all $i\geq 0$ up to some bound $B$, it suffices
to show that the power sums of roots of the two polynomials coincide up to the same power $B$.

Recall from equation~\eqref{eq:lambda}
that the roots of $P_{m,n}$ are $-2^n\prod_{j=m-1}^{m+n-2} a_j(\alpha)$, for each root $\alpha$ of $G_{m,n}$;
similarly for $P_{m+1,n}$.
Since $(P_{m,n})^2$ has each root of $P_{m,n}$ appearing twice, then after cancelling $(-2^n)^i$
when equating the relevant power sums,
it suffices to show the equality
\begin{equation}
\label{eq:NewtonGoal}
2 T\bigg(\prod_{j=m-1}^{m+n-2}a_j^i, G_{m,n}\bigg) = T\bigg(\prod_{j=m}^{m+n-1}a_j^i, G_{m+1,n} \bigg)
\end{equation}
for any $i$ with $1\leq \lfloor \frac{ni}{2}\rfloor \leq 2^{m-2}-2$.
For the remainder of the proof, fix such $i$.

By Lemma~\ref{lemma:properties of T}(b) and the definition of $G_{m,n}$, for $n\nmid m-1$, we have
\[T\bigg(\prod_{j=m-1}^{m+n-2}a_j^i, G_{m,n} \bigg)
= \sum_{d|n}\mu\bigg(\frac{n}{d}\bigg) T\bigg(\prod_{j=m-1}^{m+n-2}a_j^i, a_{m+d-1}+a_{m-1} \bigg).\]
In fact, by the same reasoning as in the proof of Lemma~\ref{lemma:Mobius sum for trace},
the same equality also holds if $n|m-1$.
Observe that for each $i\geq 0$ and $t\geq 1$, we have
\[ a_{m+dt-1}\equiv -a_{m-1} \quad\text{and}\quad
a_{m+i+dt}\equiv a_{m+i}\pmod{a_{m+d-1}+a_{m-1}}, \]
and hence by Lemma~\ref{lemma:properties of T}(a), we have
\begin{align}
	T\bigg(\prod_{j=m-1}^{m+n-2}a_j^i, G_{m,n} \bigg)
	&= \sum_{d|n}\mu\bigg(\frac{n}{d}\bigg)(-1)^{(n/d)-1} \,
	T\bigg(\prod_{j=m-1}^{m+d-2}a_j^{ni/d},a_{m+d-1}+a_{m-1} \bigg)\notag\\
	\label{eq:Mobius expansion for T}
	&=\sum_{d|n}\mu\bigg(\frac{n}{d}\bigg)(-1)^{(n/d)-1} (R_d + S_d),
\end{align}
where $R_d$ and $S_d$ are the quantities $R$ and $S$ from Lemma~\ref{lemma:A_{m,n,k}+B_{m,n,k}}
for which
\[ T \bigg(\prod_{j=m-1}^{m+d-2}a_j^{ni/d},a_{m+d-1}+a_{m-1} \bigg) = R_d + S_d .\]
(That is, in the notation of Lemma~\ref{lemma:A_{m,n,k}+B_{m,n,k}},
use $d$ in place of $n$, set $k_0=\ldots=k_{d-1}=ni/d$, and use $\ell=0$.
That Lemma does indeed apply, because 
$d\lfloor \frac{ni}{2d}\rfloor\leq \lfloor \frac{ni}{2}\rfloor \leq 2^{m-2}-2$.)

Let $K_d= d\lfloor \frac{ni}{2d}\rfloor \leq 2^{m-2}-2$, and let $A_{d,j},B_{d,j}\in\ZZ$ be the integers
in the sums defining $R_d$ and $S_d$. Then $S_d$ is
\[ \sum_{j=0}^{K_d} B_{d,j} T\bigg(c^j\prod_{s=m-1}^{m+d-2} a_s,a_{m+d-1}+a_{m-1}\bigg)
= \sum_{j=0}^{K_d}B_{d,j} \bigg(\frac{1}{2^d}-1\bigg)T(c^j,a_{m+d-1}+a_{m-1}),\]
by Lemma~\ref{lemma:multiply by c^k, whole product}.
Hence, setting $C_{d,j} = A_{d,j}+(\frac{1}{2^d}-1)B_{d,j}$,
equation~\eqref{eq:Mobius expansion for T} and the formulas defining $R_d$ and $S_d$
from Lemma~\ref{lemma:A_{m,n,k}+B_{m,n,k}} yield
\[ T\bigg(\prod_{j=m-1}^{m+n-2}a_j^i, G_{m,n} \bigg) =
\sum_{d|n}\mu\bigg(\frac{n}{d}\bigg) (-1)^{(n/d)-1}
\sum_{j=0}^{K_d} C_{d,j} T(c^j,a_{m+d-1}+a_{m-1}). \]

Applying Lemmas~\ref{lemma:multiply by c^k, whole product} and~\ref{lemma:A_{m,n,k}+B_{m,n,k}} to
$T(\prod_{j=m}^{m+n-1}a_j^i, G_{m+1,n})$ in the same fashion, we obtain
\[T\bigg(\prod_{j=m}^{m+n-1}a_j^i, G_{m+1,n}\bigg) =
\sum_{d|n}\mu\bigg(\frac{n}{d}\bigg) (-1)^{(n/d)-1}
\sum_{j=0}^{K_d} C_{d,j} T(c^j,a_{m+d}+a_{m}),\]
using the same coefficients $C_{d,j}$, since
the integer coefficients $A_{d,j}$ and $B_{d,j}$ in the sums defining $R_d$ and $S_d$
in Lemma~\ref{lemma:A_{m,n,k}+B_{m,n,k}} are independent of $m$.

Thus, to prove equation~\eqref{eq:NewtonGoal}, it suffices to prove the equalities
\[2T(c^j,a_{m+d-1}+a_{m-1}) = T(c^j,a_{m+d}+a_m)
\quad\text{for each } j=0,1,\ldots, K_d .\]
Equivalently, we must prove
\begin{equation}
\label{eq:NewtonGoal2}
T\big(c^j,(a_{m+d-1}+a_{m-1})^2\big) = T(c^j,a_{m+d}+a_m)
\quad\text{for each } j=0,1,\ldots, K_d .
\end{equation}
However, for every $d\geq 1$, observe that the coefficients of the $2^{m-2}$ highest-power terms
of the degree-$2^{m+d-1}$ polynomials $(a_{m+d-1}+a_{m-1})^2\in\ZZ[c]$ and $a_{m+d}+a_m\in\ZZ[c]$ coincide.
Indeed, since $a_{s} = a_{s-1}^2+c$, their difference is
\[ (a_{m+d-1}^2 + 2a_{m+d-1} a_{m-1} + a_{m-1}^2) - (a_{m+d-1}^2 + a_{m-1}^2 + 2c)
= 2a_{m+d-1}a_{m-1} - 2c, \]
which has degree $2^{m+d-2} + 2^{m-2}$, which is too small to affect those highest-power terms.
Therefore, because $K_d \leq 2^{m-2}-2$,
Newton's identities for symmetric polynomials immediately imply
equations~\eqref{eq:NewtonGoal2}, 
and hence equation~\eqref{eq:NewtonGoal} follows.
\end{proof}

\section{Proving multiplier polynomials are $2$-special}
\label{sec:mainproof}


Theorem~\ref{thm:P_{m,n}^2} makes possible the following result,
which will be an essential tool in the proof of Theorem~\ref{thm:Main result}.

\begin{thm}
\label{thm:inductive-2-special}
Let $m\geq 2$, and $n\geq 1$. Suppose that
\[ v_2\big(\tr(P_{m+1,n})\big)\leq m+1+\frac{3n}{2}. \]
If $P_{m,n}$ is $2$-special, then $P_{m+1,n}$ is also $2$-special.
\end{thm}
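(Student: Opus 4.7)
To prove $P_{m+1,n}$ is $2$-special, I would verify the two conditions of Definition~\ref{def:special}. Let $s' := v_2(c_{\ell-1}) = v_2(\tr(P_{m+1,n}))$. The first condition, $s' > 1$, is immediate from Proposition~\ref{prop:prespecial}, which gives $s' \geq n+1 \geq 2$.

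For the second condition — $v_2(c_{\ell-i}) > s'$ for each $i = 2, \ldots, \ell$ — the plan is a dichotomy on $i$. The key preliminary is that $(P_{m,n})^2$ is itself $2$-special, a direct consequence of $P_{m,n}$ being $2$-special (and essentially \cite[Lemma~6.3]{BG2}): writing $s = v_2(\tr(P_{m,n}))$ and $(P_{m,n})^2 = x^k + \sum b_i x^i$, one computes $v_2(b_{k-1}) = s+1$ and $v_2(b_{k-i}) \geq s+2$ for $i \geq 2$, since each cross-term $a_p a_q$ in the convolution inherits the valuation bounds from the $2$-specialness of $P_{m,n}$.

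With this in hand, I would split into two cases for each $i \geq 2$. For $i$ with $ni \geq s'$, Proposition~\ref{prop:prespecial} applied directly to $P_{m+1,n}$ gives $v_2(c_{\ell-i}) \geq ni+1 > s'$. For the complementary range $ni < s'$, I would use the hypothesis $s' \leq m+1+\tfrac{3n}{2}$ to deduce $\lfloor ni/2 \rfloor \leq 2^{m-2}-2$, so Theorem~\ref{thm:P_{m,n}^2} applies and $c_{\ell-i} = b_{k-i}$. In this low-$i$ regime, Corollary~\ref{cor:trace-doubles} (the $i=1$ case of Theorem~\ref{thm:P_{m,n}^2}) also forces $s' = s+1$, so the $2$-specialness of the square gives $v_2(c_{\ell-i}) \geq s+2 > s'$, as required.

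The main obstacle is the careful arithmetic bookkeeping that glues these two cases together. One must verify that whenever Proposition~\ref{prop:prespecial}'s direct bound $ni+1$ is too small to exceed $s'$, the Theorem~\ref{thm:P_{m,n}^2} condition $\lfloor ni/2 \rfloor \leq 2^{m-2}-2$ is automatically satisfied. The specific coefficient $\tfrac{3n}{2}$ in the hypothesis is precisely calibrated for this: the factor $\tfrac{1}{2}$ balances the floor inside the Theorem~\ref{thm:P_{m,n}^2} condition, while the extra $\tfrac{n}{2}$ (together with the $m+1$ term) provides the cushion needed for strict inequality beyond $s'$ in both regimes.
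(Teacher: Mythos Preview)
Your approach is essentially the paper's: use Proposition~\ref{prop:prespecial} for large $i$, transfer coefficients via Theorem~\ref{thm:P_{m,n}^2} for small $i$, and invoke the $2$-specialness of $(P_{m,n})^2$ from \cite[Lemma~6.3]{BG2}. Your dichotomy point ($ni=s'$) differs slightly from the paper's ($ni=2^{m-1}-2$), but this is cosmetic and leads to the same constraint.

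There is, however, a genuine gap in your final paragraph. You assert that the hypothesis $s' \leq m+1+\tfrac{3n}{2}$ is ``precisely calibrated'' so that $ni < s'$ (with $i\geq 2$) automatically forces the Theorem~\ref{thm:P_{m,n}^2} condition $\lfloor ni/2 \rfloor \leq 2^{m-2}-2$. This is false for small $m$. The implication you need would require (roughly) $m+1+\tfrac{3n}{2} \leq 2^{m-1}-2$; combined with the constraint $2n \leq ni < s'$ coming from $i\geq 2$, one finds the two cases fail to cover exactly when
\[ \tfrac{2}{3}\big(2^{m-1}-m-3\big) < n < 2m+2, \]
which is nonempty for $m\in\{2,3,4,5\}$. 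For a concrete failure, take $m=2$, $n=3$: the hypothesis permits $s'$ as large as $7$, and then $i=2$ gives $ni=6<s'$, yet $\lfloor 6/2\rfloor = 3 > 2^{0}-2 = -1$, so Theorem~\ref{thm:P_{m,n}^2} does not apply and neither branch of your dichotomy handles this coefficient. The paper closes this gap by direct Magma verification that $P_{m+1,n}$ is $2$-special in the resulting $27$ exceptional pairs $(m,n)$; your argument needs the same finite check, and the ``calibration'' claim should be replaced by the honest observation that the inequality glues the cases only for $m\geq 6$.
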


\begin{proof}
Write $(P_{m,n}(x))^2 = x^{k} + \sum_{j=0}^{k-1} b_jx^j$ and $P_{m+1,n}(x) = x^{\ell} + \sum_{j=0}^{\ell-1} c_j x^j$.
By Proposition~\ref{prop:prespecial}, we have $v_2(c_{\ell-j}) > nj$ for all $j\geq 1$.
In particular, $v_2(c_{\ell-1})>1$, satisfying the first condition for $P_{m+1,n}$ to be $2$-special.

By \cite[Lemma 6.3]{BG2} and the hypothesis that $P_{m,n}$ is $2$-special,
we have that $P_{m,n}^2$ is $2$-special as well.
In addition, for any index $j\geq 1$ such that $nj < 2^{m-1}-2$,
we have $\lfloor \frac{nj}{2}\rfloor\leq2^{m-2}-2$, and hence $b_{k-j}=c_{\ell-j}$,
by Theorem~\ref{thm:P_{m,n}^2}.
Therefore, if $2\leq j < (2^{m-1}-2)/n$, then
\[ v_2(c_{\ell-j}) = v_2(b_{k-j}) > v_2(b_{k-1}) = v_2(c_{\ell-1}). \]

It remains to prove $v_2(c_{\ell-j}) > v_2(c_{\ell-1})$
for indices $j\geq \max\{ 2 , (2^{m-1}-2)/n \}$.
By hypothesis, we have $v_2(c_{\ell-1}) \leq m+1+\frac{3n}{2}$;
and as noted above, we also have $v_2(c_{\ell-j}) > nj$.
Thus, the desired result follows immediately if $m$ and $n$ satisfy
\begin{equation}
\label{eq:valbound}
m+ 1 + \frac{3n}{2} \leq \max\big\{ 2n, 2^{m-1}-2 \big\} .
\end{equation}
Inequality~\eqref{eq:valbound} holds if and only if
\[ m+1 \leq \frac{n}{2} \quad\text{or}\quad \frac{3n}{2} \leq 2^{m-1} -m - 3, \]
and hence it fails if and only if
\begin{equation}
\label{eq:valfail}
\frac{2}{3} \big( 2^{m-1} - m -3 \big) < n < 2m+2 .
\end{equation}

Observe that
\[ \frac{2}{3} \big( 2^{m-1} - m -3 \big) - (2m+2) = \frac{2}{3} \big( 2^{m-1} - 4m -6 \big), \]
which is negative for $m\geq 6$. Thus, inequality~\eqref{eq:valbound} holds for all pairs $(m,n)$
with $m\geq 6$. Hence, the only pairs $(m,n)$ for which inequality~\eqref{eq:valbound} fails are
those for which $m=2,3,4,5$ and inequality~\eqref{eq:valfail} holds. That is, it remains to consider
$P_{m+1,n}$ for the finitely many cases
\begin{align*}
m=2 \text{ and } & 1 \leq n\leq 5, \quad & m=3 \text{ and } & 1\leq n\leq 7, \\
m=4 \text{ and }  & 1 \leq n\leq 9, \quad & m=5\text{ and } & 6\leq n\leq 11.
\end{align*}
Direct computation with Magma shows that $P_{m+1,n}$ is $2$-special in all 27 of these cases.
(Incidentally, \cite[Theorem 1.7]{BG2} also says that
$P_{m,1}$ and $P_{m,2}$ are $2$-special for all $m\geq 2$,
covering the six cases above with $n=1,2$.)
\end{proof}

We can now prove Theorem~\ref{thm:Main result}.

\begin{proof}[Proof of Theorem~\ref{thm:Main result}]
The statement holds for $p=2$ by \cite[Theorem 1.7]{BG2}, so we can assume $3\leq p \leq 1021$.
By Proposition~\ref{prop:implies} and Theorem~\ref{thm:special},
it suffices to show that $P_{m,p}$ is $2$-special for any $m\geq 2$.
According to Corollary~\ref{cor:trace-doubles}, we have
\begin{equation}
\label{eq:trace-increases-one}
v_2(\tr(P_{m+1,p})) = v_2(\tr(P_{m,p}))+1
\end{equation}
for any $m\geq 11$. Let $\alpha$ be a root of $G_{m,1} = (a_m+a_{m-1})/c$,
which is irreducible over $\QQ$ by \cite[Corollary 1.1]{Gok19}.
Setting $K=\QQ(\alpha)$, Theorem~\ref{thm:Trace(P_{m,p})} gives us
\begin{equation}
\label{eq:trace-multiplier polynomial for p}
\tr(P_{m,p}) = 2^p\Big(\text{Tr}_{K/\mathbb{Q}}\big(a_{m-1}(\alpha)^p\big)+2^{m+p-2}-2^{m-2}\Big).
\end{equation}
Magma computations of $\text{Tr}_{K/\mathbb{Q}}(a_{m-1}(\alpha)^p)$,
together with equation~\eqref{eq:trace-multiplier polynomial for p}, show that
\begin{equation}
\label{eq:v2bound}
v_2(\tr(P_{m,p}))<m+\frac{3p}{2}
\end{equation}
for all primes $3\leq p\leq 1021$ and all integers $2\leq m\leq 10$.
(See Section~\ref{sec:data} for some of these trace valuations.)
Therefore, applying equation~\eqref{eq:trace-increases-one} inductively, it follows that
the bound~\eqref{eq:v2bound} holds
for all primes $3\leq p\leq 1021$ and \emph{all} integers $m\geq 2$.
By applying Theorem~\ref{thm:inductive-2-special} inductively, then,
it suffices to show that $P_{2,p}$ is $2$-special for every prime $3\leq p\leq 1021$.

By Remark~\ref{remark:m2case}, we have
\[ \tr(P_{2,p}) = 2^{2p} - 2^{p+1}, \]
and hence $v_2(\tr(P_{2,p}))=p+1$.
On the other hand, by Proposition~\ref{prop:prespecial}, writing $P_{2,p}(x) = x^k+\sum_{i=0}^{k-1} b_i x^i$,
we have
\[ v_2(b_{k-j}) > pj \geq 2p > p+1 = v_2(\tr(P_{2,p})) \quad \text{for all } 2\leq j \leq k ,\]
showing that $P_{2,p}$ is $2$-special.
\end{proof}

\section{Appendix: Some trace data for multiplier polynomials}
\label{sec:data}
In the proof of Theorem~\ref{thm:Main result} in Section~\ref{sec:mainproof}, 
we had to rely on Magma computations of $v_2(\tr(P_{m,p}))$, the $2$-adic valuation
of second-from-top coefficient of the multiplier polynomial $P_{m,p}$.
The full set of those valuations, 
for all primes $3\leq p\leq 1021$ and all integers $2\leq m\leq 10$,
would take up too much space here, but we include some of them in Table~\ref{tab:data}.
We also include the values of $m+p$, since we found $v_2(\tr(P_{m,p}))\leq m+p$
in almost all instances, thus certainly satisfying inequality~\eqref{eq:v2bound}.
(But not always! As Table~\ref{tab:data} shows, this sharper bound fails when
$(m,p)$ is (6,23), (7,23) or (10,487), for example. We have marked all such instances in the table
in boldface.)

\begin{table}
\begin{tabular}{|c|c|c||c|c|c|}
\hline
	$(m,p)$ & $v_2(\tr(P_{m,p}))$ & $m+p$ & $(m,p)$ & $v_2(\tr(P_{m,p}))$ & $m+p$ \\
\hline
	$(10,509)$ & $512$ & $519$ &	$(10,503)$ & $511$ & $513$ \\
\hline
	$(10,499)$ & $509$ & $509$ &	$(10,491)$ & $496$ & $501$ \\
\hline
	$(10,487)$ & $500$ & $\textbf{497}$ &	$(10,479)$ & $486$ & $489$ \\
\hline
	$(10,467)$ & $473$ & $477$ &	$(10,463)$ & $470$ & $473$ \\
\hline
	$(10,461)$ & $470$ & $471$ &	$(10,457)$ & $465$ & $467$ \\
\hline
	$(9,251)$ & $256$ & $260$ &	$(9,241)$ & $249$ & $250$ \\
\hline
	$(9,239)$ & $245$ & $248$ &	$(9,233)$ & $240$ & $242$ \\
\hline
	$(9,229)$ & $237$ & $238$ & 	$(9,227)$ & $236$ & $236$ \\
\hline
	$(9,223)$ & $230$ & $232$ &	$(9,211)$ & $219$ & $220$ \\
\hline
	$(9,199)$ & $206$ & $208$ &	$(9,197)$ & $202$ & $206$ \\
\hline
	$(8,113)$ & $120$ & $121$ &	$(8,109)$ & $112$ & $117$ \\
\hline
	$(8,107)$ & $112$ & $115$ &	$(8,103)$ & $109$ & $111$ \\
\hline
	$(8,101)$ & $105$ & $109$ &	$(8,97)$ & $102$ & $105$ \\
\hline
	$(8,89)$ & $96$ & $97$ &		$(8,83)$ & $91$ & $91$ \\
\hline
	$(8,79)$ & $86$ & $87$ &		$(8,73)$ & $78$ & $81$ \\
\hline
	$(7,61)$ & $64$ & $68$ &		$(7,59)$ & $65$ & $66$ \\
\hline
	$(7,53)$ & $56$ & $60$ &		$(7,47)$ & $54$ & $54$ \\
\hline
	$(7,43)$ & $50$ & $50$ &		$(7,41)$ & $45$ & $48$ \\
\hline
	$(7,37)$ & $43$ & $44$ &		$(7,31)$ & $32$ & $38$ \\
\hline
	$(7,29)$ & $33$ & $36$ &		$(7,23)$ & $31$ & \textbf{30} \\
\hline
	$(6,29)$ & $32$ & $35$ &		$(6,23)$ & $30$ & \textbf{29} \\
\hline
	$(6,19)$ & $24$ & $25$ &		$(6,17)$ & $21$ & $23$ \\
\hline
	$(6,13)$ & $19$ & $19$ &		$(6,11)$ & $16$ & $17$ \\
\hline
	$(6,7)$ & $9$ & $13$ &		$(6,5)$ & $9$ & $11$ \\
\hline
	$(6,3)$ & $6$ & $9$ &		$(6,2)$ & $6$ & $8$ \\
\hline
	$(5,13)$ & $18$ & $18$ &		$(5,11)$ & $15$ & $16$ \\
\hline
	$(5,7)$ & $8$ & $12$ &		$(5,5)$ & $8$ & $10$ \\
\hline
	$(5,3)$ & $5$ & $8$ &		$(5,2)$ & $5$ & $7$ \\
\hline
	$(4,5)$ & $7$ & $9$ &		$(4,3)$ & $4$ & $7$ \\
\hline
	$(4,2)$ & $4$ & $6$ &  		$(3,2)$ & $3$ & $5$ \\
\hline
\end{tabular}
\caption{Trace valuations of multiplier polynomials $P_{m,p}$, relative to $m+p$}
\label{tab:data}
\end{table}

\textbf{Acknowledgments}.
The first author gratefully acknowledges the support of NSF grant DMS-2401172.
The authors thank Joe Silverman for his helpful suggestions.

\end{document}